\documentclass{amsart}
\synctex=1
\usepackage{amsfonts}
\usepackage{amsmath}
\usepackage{amssymb}
\usepackage{amsthm}
\usepackage{esint}
\usepackage{yhmath}
\usepackage{enumerate}
\usepackage[noadjust]{cite}

\newcommand{\sps}[1]{\left( #1 \right)}

\newcommand{\f}[2]{\tfrac{#1}{#2}}
\newcommand{\ff}[2]{\frac{#1}{#2}}
\newcommand{\n}[1]{\lVert #1 \rVert}
\newcommand{\ns}[1]{\left\lVert #1 \right\rVert}
\renewcommand{\a}[1]{\lvert #1 \rvert}
\newcommand{\as}[1]{\left\lvert #1 \right\rvert}
\newcommand{\eval}[1]{\left. #1 \right\rvert}
\ifdefined\stretchybydefault
\newcommand{\abs}[1]{\left \lvert #1 \right \rvert}

\newcommand{\norm}[1]{\left \lVert #1 \right \rVert}

\else

\newcommand{\abs}[1]{\lvert #1 \rvert}

\newcommand{\norm}[1]{\lVert #1 \rVert}

\fi
\newcommand{\tn}[1]{{\left\vert\kern-0.25ex\left\vert\kern-0.25ex\left\vert #1 
    \right\vert\kern-0.25ex\right\vert\kern-0.25ex\right\vert}}
\let\j\jap

\newcommand{\loc}{{\mathrm{loc}}}

\DeclareMathOperator{\Lip}{Lip}

\DeclareMathOperator{\divergence}{div}
\renewcommand{\div}{\divergence}

\newcommand{\R}{\mathbb{R}}

\newcommand{\C}{\mathbb{C}}

\newcommand{\g}{\mbox{\fontfamily{pzc}\selectfont g}}

\newcommand{\mc}[1]{\mathcal{#1}}

\providecommand{\comment}[1]{}

\renewcommand{\bar}{\overline}

\renewcommand{\ll}{\lesssim}
\renewcommand{\gg}{\gtrsim}
\let\al\alpha
\let\b\beta
\let\z\zeta
\let\g\gamma

\let\d\delta
\let\e\epsilon

\let\th\theta
\let\Si\Sigma
\let\si\sigma
\let\ti\tilde
\let\ld\lambda
\let\lp\Delta
\let\na\nabla
\let\Ld\Lambda

\let\cd\cdot
\let\om\omega
\let\Om\Omega

\newcommand{\pd}{\partial}

\newcommand{\thnorm}[1]{|\kern-1pt|\kern-1pt| #1 |\kern-1pt|\kern-1pt|}
\theoremstyle{remark}
\newtheorem*{remark*}{Remark}
\theoremstyle{definition}
\newtheorem*{openproblem*}{Open Problem}
\newtheorem*{dfn*}{Definition}

\theoremstyle{plain}
\newtheorem*{theorem*}{Theorem}
\newtheorem*{proposition*}{Proposition}
\newtheorem*{conjecture*}{Conjecture}
\newtheorem{theorem}{Theorem}[section]

\newtheorem*{lemma*}{Lemma}
\newtheorem{lemma}[theorem]{Lemma}
\newtheorem*{corollary*}{Corollary}
\newtheorem{corollary}[theorem]{Corollary}

\newcommand{\qq}{\quad\quad}

\binoppenalty=\maxdimen
\relpenalty=\maxdimen

\begin{document}
\title[Uniqueness in Calder\'on's problem]{Uniqueness in Calder\'on's problem for conductivities with unbounded gradient}
\author{Boaz Haberman}
\thanks{This material is based upon work supported by the National Science Foundation Graduate Research Fellowship under Grant No. DGE 1106400. Any opinion, findings, and conclusions or recommendations expressed in this material are those of the author and do not necessarily reflect the views of the National Science Foundation.}
\begin{abstract}
We prove uniqueness in the inverse conductivity problem for uniformly elliptic conductivities in $W^{s,p}(\Omega)$, where $\Omega \subset \R^n$ is Lipschitz, $3\leq n \leq 6$, and $s$ and $p$ are such that $ W^{s,p}(\Omega)\not \subset W^{1,\infty}(\Omega)$. In particular, we obtain uniqueness for conductivities in $W^{1,n}(\Omega)$ ($n=3,4$). This improves on the result of the author and Tataru, who assumed that the conductivity is Lipschitz. 
\end{abstract}
\maketitle
\section{Introduction}
Let $\Omega\subset \R^n$ be a bounded domain with Lipschitz boundary. To a positive real-valued function $\g$ on $\Om$ with $0<c<\g<c^{-1}$ we associate an elliptic operator $L_\g$ in divergence form:
\[L_\gamma u := \div(\gamma \nabla u).\]
Given $f\in H^{1/2}(\pd \Omega)$, there exists a unique solution $u_f$ to the Dirichlet problem 
\begin{align*}
L_\gamma u_f & = 0 \quad \text{in $\Omega$}\\
\eval{u_f}_{\pd\Omega} & = f,
\end{align*}
and  we define the Dirichlet-to-Neumann map $\Lambda_\gamma: H^{1/2}(\pd \Om) \to H^{-1/2}(\pd \Om)$ formally by
\[\Lambda_\gamma(f) := \eval{\gamma \frac{\pd u_f}{\pd \nu}}_{\pd \Omega},\]
where $\pd/\pd_\nu$ is the outward normal derivative at the boundary. By identifying $H^{1/2}(\pd\Om)$ with the quotient $H^1(\Om)/H^1_0(\Om)$, we can interpret this definition in a weak sense as follows: If $v \in H^1(\Om)$ satisfies $\eval{v}_{\pd \Om} = g$, then
\[\j{\Ld_\g(f), g} := \int_\Om \g\na u_f\cd \bar{\na v}\,dx,\]
where the notation $\j{\cd,\cd}$ indicates the pairing between $H^{-1/2}(\pd \Om)$ and $H^{1/2}(\pd \Om)$. Our main theorem is that the map $\g \mapsto \Ld_\g$ is injective for certain $\g$:
\begin{theorem}
\label{maintheorem}
The map $\g \mapsto \Ld_\g$ is injective for $\g\in W^{s,p}(\Om)$, where
\begin{align*}
(s,p)&=\begin{cases}
(1,n) & n=3,4\\
(1 + (1-\th)(\ff 1 2 - \ff 2 n), \ff{n}{1-\th}) & n=5,6
\end{cases}
\end{align*}
and $\th \in [0,1)$.
\end{theorem}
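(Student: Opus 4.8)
The plan is to run the Sylvester--Uhlmann complex geometric optics (CGO) scheme in the form developed by Haberman and Tataru, but with their bound $\na\log\g\in L^\infty$ replaced by the critical bound $\na\log\g\in L^n$ when $n=3,4$ (and, when $n=5,6$, by the interpolated conditions in the statement --- this being exactly the regime ``$W^{s,p}\not\subset W^{1,\infty}$''). Assume $\Ld_{\g_1}=\Ld_{\g_2}$. After a boundary determination step (available for this regularity class) followed by a collar extension we may suppose $\g_1=\g_2$ near $\pd\Om$ and extend both to $\R^n$ with $\g_1=\g_2$ outside a ball and $1/C\le\g_j\le C$; put $\phi_j=\ff12\log\g_j$, so that $\na\phi_j$ lies in $L^n$ (resp.\ the $\th$-dependent Sobolev space) and is compactly supported, while $\g_1-\g_2$ is supported in $\Om$. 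The substitution $u=\g_j^{-1/2}v$ converts $L_{\g_j}u=0$ into $(\Delta-q_j)v=0$ with
\[ q_j=\g_j^{-1/2}\Delta\g_j^{1/2}=\div(\na\phi_j)+\abs{\na\phi_j}^2\in W^{-1,n}+L^{n/2}, \]
which is only a \emph{distribution}; this is precisely what distinguishes the problem from the Lipschitz case. Since $\g_1=\g_2$ near $\pd\Om$, the Alessandrini identity yields
\[ \j{q_1-q_2,\;v_1v_2}=0 \]
for all $v_j$ solving $(\Delta-q_j)v_j=0$ in $\Om$, the pairing making sense once $v_1v_2$ is placed in a space dual to $W^{-1,n}+L^{n/2}$.

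\textbf{CGO solutions.} Fix $\xi\in\R^n$ and choose $\zeta_1,\zeta_2\in\C^n$ with $\zeta_j\cdot\zeta_j=0$, $\zeta_1+\zeta_2=i\xi$, $\abs{\zeta_j}\to\infty$ (possible since $n\ge3$), retaining the freedom in $\zeta_j$ for an averaging argument below. Seeking $v_j=e^{x\cdot\zeta_j}(1+\psi_j)$ forces
\[ \Delta_{\zeta_j}\psi_j=q_j(1+\psi_j),\qquad \Delta_\zeta:=e^{-x\cdot\zeta}\Delta e^{x\cdot\zeta}=\Delta+2\zeta\cdot\na, \]
with Fourier symbol $p_\zeta(\eta)=-\abs{\eta}^2+2i\zeta\cdot\eta$. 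I would work in the Bourgain-type spaces $\dot X^b_\zeta$, $\norm{f}_{\dot X^b_\zeta}=\norm{\abs{p_\zeta}^b\widehat f}_{L^2}$, in which the solution operator $\Delta_\zeta^{-1}$ (division by $p_\zeta$) is an isometry $\dot X^{-1/2}_\zeta\to\dot X^{1/2}_\zeta$, and solve the fixed-point problem $\psi_j=\Delta_{\zeta_j}^{-1}q_j+\Delta_{\zeta_j}^{-1}(q_j\psi_j)$ in $\dot X^{1/2}_{\zeta_j}$. Decomposing $q_j\psi_j=\div(\na\phi_j\,\psi_j)-\na\phi_j\cdot\na\psi_j+\abs{\na\phi_j}^2\psi_j$, everything reduces to bilinear estimates of the type
\[ \norm{\div(\na\phi\,\psi)}_{\dot X^{-1/2}_\zeta}+\norm{\na\phi\cdot\na\psi}_{\dot X^{-1/2}_\zeta}+\norm{\abs{\na\phi}^2\psi}_{\dot X^{-1/2}_\zeta}\lesssim\Phi(\na\phi)\norm{\psi}_{\dot X^{1/2}_\zeta}, \]
together with $\norm{\Delta_\zeta^{-1}q_j}_{\dot X^{1/2}_\zeta}\lesssim\Phi(\na\phi_j)$, with $\Phi$ built from $\norm{\na\phi}_{L^n}$ and $\norm{\na\phi}_{L^n}^2$ and using the embedding properties of $\dot X^{1/2}_\zeta$. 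These refine the Haberman--Tataru bilinear estimate (morally its $L^\infty$ endpoint); their scaling is what forces $3\le n\le6$ and, for $n=5,6$, the derivative/integrability trade-off recorded by $\th$ --- obtained concretely by interpolating the rough estimate against the Lipschitz one. Splitting $\na\phi_j$ into a mollified low-frequency part (absorbed into $\Phi$ once $\abs{\zeta_j}$ is large) and a small high-frequency remainder makes the iteration a contraction, so $\psi_j$ exists with $\sup_{\abs{\zeta}\text{ large}}\norm{\psi_j}_{\dot X^{1/2}_{\zeta_j}}<\infty$, and small once the truncation scale is fixed.

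\textbf{Averaging and conclusion.} Inserting the $v_j$ into the bilinear identity and using $e^{x\cdot(\zeta_1+\zeta_2)}=e^{ix\cdot\xi}$ gives
\[ \widehat{q_1-q_2}(-\xi)=-\j{q_1-q_2,\;e^{ix\cdot\xi}(\psi_1+\psi_2+\psi_1\psi_2)}. \]
Pairing $q_j=\div(\na\phi_j)+\abs{\na\phi_j}^2$ against the bracket, by the same bilinear estimates, controls the right-hand side by $O(1)$ uniformly in $\zeta$; this is upgraded to $o(1)$ by averaging $\Im\zeta_j$ over the sphere of radius $\rho$ in $\xi^\perp$ and letting $\rho\to\infty$, via the Haberman--Tataru averaging lemma, which exploits that $\{p_\zeta=0\}$ is a codimension-$2$ set sweeping out a null set, so the $\dot X$-norms of $\Delta_\zeta^{-1}$ applied to a fixed distribution decay on average. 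Hence $\widehat{q_1-q_2}\equiv0$, i.e.\ $q_1=q_2$ on $\Om$. Finally, $\g_1^{1/2}$ and $\g_2^{1/2}$ are positive solutions of $(\Delta-q_1)w=0$ in $\Om$ coinciding near $\pd\Om$; writing their difference as $\g_1^{1/2}h$ turns the equation into $\div(\g_1\na h)=0$ with $h$ vanishing near $\pd\Om$, so $h\equiv0$ and $\g_1=\g_2$.

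\textbf{Main obstacle.} The crux is the bilinear bound for the first-order term $\na\phi\cdot\na\psi$ (and $\div(\na\phi\,\psi)$) in $\dot X^{-1/2}_\zeta$ with $\na\phi$ merely in $L^n$: this is exactly where the Lipschitz hypothesis was used in Haberman--Tataru, its removal is what restricts the argument to $3\le n\le6$ and dictates the $\th$-family for $n=5,6$, and --- since the resulting operator is not small pointwise in $\zeta$ --- it forces the error in the bilinear identity to be controlled through the $\zeta$-averaging rather than a Neumann series, so making the averaging lemma interact cleanly with the rough bilinear bounds is the second delicate point.
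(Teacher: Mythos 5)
Your high-level outline is the right one — CGO solutions in $\dot X^{1/2}_\zeta$ spaces, a refined bilinear estimate to handle $\na\log\g\in L^n$, an averaging argument, and the Alessandrini argument to pass from $q_1=q_2$ to $\g_1=\g_2$ — and the boundary determination, extension, and final steps are correct. But there are two genuine gaps in the middle, and they are not peripheral: they are precisely where the paper's work lies.

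First, you claim a deterministic bilinear bound
\[
\norm{\div(\na\phi\,\psi)}_{\dot X^{-1/2}_\zeta}+\norm{\na\phi\cdot\na\psi}_{\dot X^{-1/2}_\zeta}\lesssim \Phi(\na\phi)\,\norm{\psi}_{\dot X^{1/2}_\zeta}
\]
with $\Phi$ built from Lebesgue norms of $\na\phi$, and you use it (after a mollification split) to make the Picard iteration a contraction \emph{for all large $\zeta$}, producing $\psi_j$ with $\sup_{\abs\zeta}\norm{\psi_j}_{\dot X^{1/2}_{\zeta_j}}<\infty$. This is exactly what fails. Replacing the $H^1_\tau$ energy estimate by the restriction-type estimate $\norm{u}_{2n/(n-2)}\lesssim\norm{u}_{\dot X^{1/2}_\zeta}$ loses a full factor of $\tau$ relative to the Lipschitz estimate, and there is no room for such a loss. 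What one actually obtains is a bound of the form
\[
\norm{m_{\na f}}_{X^{1/2}_\zeta\to X^{-1/2}_\zeta}\lesssim \norm{f}_{L^n}+\sup_{\nu\le\ld\lesssim\tau}(\ld/\tau)^{\beta}(\ld/\nu)^{1/p}\ld^{s-1}\norm{P_\ld P^{e_1}_{\le 8\nu}f}_{L^p},
\]
whose second term is genuinely $\zeta$-dependent (through the choice of $e_1$) and is \emph{not} controlled by any fixed Sobolev or Lebesgue norm of $f$; mollifying away a small $L^n$ tail does not help, because the dangerous case is $\hat f$ concentrating near the plane $\{\xi\cdot e_1=0\}$, and no amount of high/low frequency splitting removes that. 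Consequently, even the \emph{existence} of the CGO remainder $\psi_j$ — not merely the smallness of the error in the Alessandrini identity — is a statement that only holds for a suitable set of $(\tau,U)$ selected by an averaging argument. Your proposal defers averaging entirely to the error estimate; in fact it must enter one step earlier, in the operator bound for $m_{q_j}$ that drives the contraction.

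Second, the averaging you invoke — fixing $\xi$ and averaging $\Im\zeta_j$ over spheres in $\xi^\perp$, i.e.\ the original Haberman--Tataru scheme — is specifically the one that does \emph{not} work here. The gain in the refined bilinear estimate is a factor $D^{\,\alpha}D_{e_1}^{-\alpha}$ that improves $f$ in the $e_1$ direction; if $\hat f$ concentrates along the frequency $k=\xi$ and one only averages $e_1$ over directions perpendicular to $k$, that gain is useless. One has to give up fixing the test frequency $k$ and instead rotate the full orthonormal triple $\{e_1,e_2,e_3\}$ by $U\in O(n)$, averaging over Haar measure; the relevant $L^p$ bound is then
\[
\Bigl(\int_{O(n)}\bigl[(\ld/\nu)^{1/p}\norm{P_\ld P^{Ue_1}_{\le\nu}f}_p\bigr]^p\,d\mu(U)\Bigr)^{1/p}\lesssim\norm{P_\ld f}_p ,
\]
which interpolates a trivial $L^\infty$ bound with a Plancherel--spherical-coordinates computation at $p=2$. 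The factor $(\ld/\nu)^{1/p}$ (rather than $(\ld/\nu)^{1/2}$) is exactly what ties the bilinear loss to the assumption $\g\in W^{s,p}$ and is where the conditions $n\le 6$ and the $\theta$-family for $n=5,6$ come from; your proposal does not explain this numerology, it simply attributes it to ``scaling.'' A clean exposition then combines both averaged quantities — $\norm{m_{q_i}}_{X^{1/2}_\zeta\to X^{-1/2}_\zeta}$ and $\norm{q_i}_{X^{-1/2}_\zeta}$ — into a single functional $F(\tau,U)$ whose $\liminf$ of $M$-averages vanishes, and selects $(\tau,U)$ accordingly before running the Neumann series; that logical ordering is the reverse of the one you describe. (A further minor point: extending $\g_1,\g_2$ to agree outside $\Om$ requires $s-1/p\le 1$, which together with the bilinear numerology is what closes the door above $n=6$.)
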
 

This problem was introduced by Calder\'on, who proved uniqueness in~\cite{Calderon1980} for the linearized problem. The basic approach to this problem in this paper is the method introduced by Sylvester and Uhlmann in~\cite{Sylvester1987}, where they proved uniqueness for $n\geq 3$ and $\g \in C^2$ based on ideas in~\cite{Calderon1980}. It is of interest to determine how much this regularity condition can be relaxed. Uniqueness is known to fail (at least in the anistropic problem) for conductivities which are sufficiently singular, as was shown in~\cite{greenleaf2003,Kohn2008}. 

For $n\geq 3$, the regularity assumption in~\cite{Sylvester1987} was relaxed to $\g \in C^{3/2+}$ by Brown~\cite{Brown1996}, to $C^{3/2}$ by~\cite{Paivarinta2003}, to $W^{3/2,2n+}$ in~\cite{Brown2003}, and to $C^1$ conductivities or Lipschitz conductivities close to the identity in~\cite{Haberman2013}. Recently, the smallness condition for Lipschitz conductivities was removed in~\cite{Caro2014}. In~\cite{Nguyen2014}, uniqueness was shown in three dimensions for conductivities in $W^{3/2+,2}$.

In two dimensions, the low-regularity theory is fairly well-understood. There are essentially sharp results, even for anisotropic conductivities. In particular, uniqueness holds for $\g$ in $L^\infty$, which is is invariant under the scaling associated to the Dirichlet problem. The methods in the plane are somewhat different, and we refer the reader to~\cite{Astala2011} and references therein. 

There are some reasons to doubt that uniqueness holds in higher dimensions for conductivities with less than one derivative. Calder\'on's problem seems to be closely related to unique continuation; in particular, most of the progress in both of these problems involves Carleman estimates. Unique continuation in the plane holds for elliptic operators in divergence form when the coefficients are merely bounded~\cite{Alessandrini1992}; in higher dimensions, however, this is only known for Lipschitz coefficients~\cite{Aronszajn1962}. Furthermore, there are counterexamples to unique continuation for elliptic equations where the coefficients are $C^\al$ with any $\al < 1$~\cite{Plis1963, Miller1973, Mandache1998}. 

The conductivity equation $\div (\g \na u) =0$ is equivalent to 
\[(\lp + A\cd \na) u = 0,\]
where $A = \na \log \g$. Unique continuation holds for this equation as long as $A \in L^n$~\cite{Wolff1992,Koch2001}. Brown~\cite{Brown2003} conjectured uniqueness in the inverse conductivity problem for $\g \in W^{1,n}$. We verify that this conjecture holds in dimensions three and four.

One can also study the closely-related problem of determining a Schr\"odinger potential $q$ from the Cauchy data associated with the operator $-\lp + q$. In this setting Lavine and Nachman used the $L^p$ Carleman estimates of~\cite{Kenig1987} to show that the Cauchy data determine $q\in L^{n/2}$ (see also~\cite{Chanillo1990,DosSantosFerreira2009,Nguyen2014} for similar results). These $L^p$ Carleman estimates are the starting point for our analysis.

It was shown in~\cite{Sylvester1987} that the inverse conductivity problem reduces to the inverse problem for $-\lp + q$, where $q = \g^{-1/2} \lp \g^{1/2}$. One step in this reduction is to show that the map $\Ld_\g$ determines $\g$ and its normal derivatives at the boundary. In~\cite{Kohn1984}, Kohn and Vogelius established that for smooth conductivities, the map $\gamma \to \Lambda_\gamma$ determines the values of $\gamma$ and all of its derivatives on $\pd \Omega$. This boundary determination result holds in much greater generality~\cite{Alessandrini1990,Sylvester1988}. In particular, Brown in~\cite{Brown2013} showed that the boundary values of a $W^{1,1}$ conductivity are determined by the Dirichlet-to-Neumann map. This improvement will be a crucial ingredient in this paper.

The key idea in~\cite{Sylvester1987} is that if $\g_i$ are such that $\Ld_{\g_1}=\Ld_{\g_2}$, then
\[\int_{\Om} (q_1-q_2)\,u_1\,u_2\,dx = 0,\]
where the $u_i$ are arbitrary solutions to the Schr\"odinger equation $(-\lp + q)u_i = 0$ in $\Om$. It follows that one way to show that the potentials $q_1$ and $q_2$ coincide is to produce enough solutions to the corresponding Schr\"odinger equations that their products are dense in some sense. This idea goes back to the original paper of Calder\'on~\cite{Calderon1980}. In~\cite{Sylvester1987}, Sylvester and Uhlmann proved a uniqueness result for $C^2$ conductivities by constructing complex geometrical optics (CGO) solutions of the form $u_i = e^{x\cdot \zeta_i}(1 + \psi_i)$. Here the $\zeta_i\in \C^n$ are chosen so that $\zeta_i \cdot \zeta_i = 0$, so that $e^{x\cdot \zeta_i}$ is harmonic, and $e^{x\cdot \zeta_1} e^{x\cdot \zeta_2} = e^{ix\cdot k}$ for some fixed frequency $k\in \R^n$. In three or more dimensions, these conditions allow for an infinite family of pairs $\zeta_1,\zeta_2$ with $\abs{\zeta_i} \to \infty$. The remainders $\psi_i$ decay to zero in a suitable sense sense as $\abs{\zeta_i} \to \infty$, so that the product $u_1 u_2$ converges to $e^{ix\cdot k}$. Since $k$ is arbitrary, uniqueness follows from Fourier inversion.

To construct these CGO solutions, fix $\zeta \in \C^n$ such that $\zeta \cdot \zeta = 0$, and note that $e^{-x\cdot \zeta} \Delta( e^{x\cdot \zeta}\psi) = (\Delta + 2\zeta \cdot \nabla) \psi$. Then $u = e^{x\cdot \zeta}(1+\psi)$ solves $\Delta u = qu$ if
\begin{equation}
\label{cgoequation}
\Delta_\zeta \psi := \Delta \psi + 2 \zeta \cdot \nabla \psi = q (1+\psi).
\end{equation}
Let $m_q$ be the map sending $\psi$ to $q\psi$. We will treat this equation perturbatively, by viewing $\Delta_\zeta - m_q$ as a perturbation of $\Delta_\zeta$. The operator $\Delta_\zeta$ has a right inverse defined by 
\[\widehat{\Delta_\zeta^{-1} f}(\xi) = p_\zeta(\xi)^{-1} \hat f(\xi),\]
where
\[p_\zeta(\xi) = -\abs{\xi}^2 + 2i\zeta \cdot \xi.\]
We take  $\z$ of the form $\tau (e_1 - i e_2)$, where $e_1,e_2$ are orthogonal unit vectors. Then $\lp_\z$ is characteristic on a codimension-2 sphere $\Si_\z$, with
\[\Si_\z = \{\xi: \xi\cd e_1 = 0, \a{\xi-\tau e_2}=\tau\}.\]

To construct a solution to~\eqref{cgoequation}, we show that $m_q$ is a perturbation of $\lp_\z$ with respect to an appropriate norm. 

It was observed in~\cite{Paivarinta2003} that it is possible to construct CGO solutions for $C^1$ conductivities using Picard iteration in Sobolev spaces. However, because the inhomogeneity in~\eqref{cgoequation} is not bounded in the correct space, one cannot control these solutions.

A simplified explanation of the problem is as follows: in problems involving Carleman estimates, it is most natural to work with Sobolev spaces depending on a large parameter $\tau$ (or a small parameter in the semiclassical notation). In our problem $\tau$ is proportional to $\a{\z}$. The quantity $\tau$ is thought of as equivalent to a derivative $\na$. In view of this correspondence, we define the Sobolev space $H^s_\tau(\R^n)$ by 
\[\n{u}_{H^s_\tau} := \n{(-\lp + \tau^2)^{s/2} u}_{L^2}.\]
For the operator $\lp_\zeta^{-1}$ the following Carleman estimate holds~\cite{Salo2009}:
\[\n{u}_{H^{1}_\tau} \ll \tau \n{\lp_\z u}_{H^{-1}_\tau},\]
where $u$ is supported in some fixed compact set. This means that once we account for the physical space localization in the problem, the operator $\lp_\z^{-1}$ (heuristically speaking) maps $H^{-1}_\tau$ to $H^{1}_\tau$ with constant $\tau$. On the other hand, we have $q = \lp \f 1 2 \log \g + l.o.t.$
\begin{align}
\a{\j{m_q u, v}_{L^2}} &\ll \n{\na (\log \g) \na(u \bar v)}_{L^1}+l.o.t. \\
&\ll \n{\g}_{W^{1,\infty}} (\n{\na u}_{L^2} \n{v}_{L^2} + \n{u}_{L^2} \n{\na v}_{L^2}) + \dotsb  \\
&\ll \tau^{-1} \n{\g}_{W^{1,\infty}} \n{u}_{H^1_\tau}\n{v}_{H^1_\tau}.
\label{sobolev}
\end{align}
By duality, this implies that $m_q$ maps $H^{1}_\tau$ back to $H^{-1}_\tau$ with constant $\tau^{-1}$. This means that the composition $m_q \lp_\zeta^{-1}$ is bounded, and there is some hope of construction CGO solutions perturbatively (see~\cite{Krupchyk2014} for this type of analysis). We are trying to solve $(\lp_\zeta - m_q) u = q$, and we certainly have $q \in H^{-1}_\tau$. However, because we lost a factor of $\tau$ in the Carleman estimate, our solution only satisfies an estimate of the form $\n{\psi}_{H^1_\tau} \leq \tau \n{q}_{H^{-1}_\tau}$. This is bad, because $\psi$ is supposed to be small in $H^1_\tau$.

In~\cite{Nguyen2014}, this problem with Sobolev spaces is circumvented by showing that the first iterate in the solution procedure is bounded on average. This avoids the use of specialized spaces at the expense of requiring slightly more differentiability.

In~\cite{Haberman2013}, Tataru and the author dealt with this problem using specialised function spaces. These are inspired by the $X^{s,b}$ spaces of Bourgain~\cite{Bourgain1993}, and were used in the context of Carleman estimates by~\cite{Tataru1996a}. Define the space $\dot X^b_\zeta$ by the norm
\[\norm{u}_{\dot X^b_\zeta} = \norm{\abs{p_\zeta(\xi)}^{b} \hat u(\xi)}_{L^2},\]
where $p_\zeta(\xi) = -\abs{\xi}^2 + 2i\zeta \cdot \xi$ is the symbol of $\Delta_\zeta$. In this paper, we take $b=\pm 1/2$. It is easy to see that $\norm{\Delta^{-1}_\zeta}_{\dot X_\zeta^{-1/2} \to \dot X_\zeta^{1/2}} =1$. We will also make use of the inhomogeneous spaces $X_\zeta^{b}$ with norm
\[\norm{u}_{X_\zeta^{b}} = \norm{(\abs{\zeta} + \abs{p_\zeta(\xi)})^{b} \hat u(\xi)}_{L^2}.\]
The map $m_q$ satisfies 
\begin{equation}
\label{lipschitzbound}
\n{m_q}_{\dot X^{1/2}_\z \to \dot X^{-1/2}_\z} \ll \n{\g}_{\Lip},
\end{equation}
and we may solve~\eqref{cgoequation} perturbatively as before. This bound follows from~\eqref{sobolev}, the easy estimate
\begin{align}
\label{easyenergy}
\n{u}_{H^1_\tau} &\ll \tau^{1/2} \n{u}_{X^{1/2}_\z},
\end{align}
and the fact that for localized $u$, the $X^{1/2}_\z$ and $\dot X^{1/2}_\z$ norms are equivalent.

Solving in this way gives a CGO solution $\psi$ with $\n{\psi}_{\dot X^{1/2}_\z} \ll \n{q}_{\dot X^{-1/2}_\z}$. Unfortunately, the best bound on $\n{q}_{\dot X^{-1/2}_\z}$ is $\tau^{1/2} \n{\g}_{H^1}$. This means that the $\dot X^{1/2}_\z$ norm of CGO solutions might grow like $\tau^{1/2}$ as $\tau \to \infty$. 

By an averaging argument, however, $\n{q}_{\dot X_\z^{-1/2}}$ is bounded for a large set of $\z$ (which may depend on $q$) as long as $\g \in H^1$. This is because the $\dot X^{-1/2}$ norm is only large when $\hat q$ concentrates near $\Si_\z$. As we vary $\z$, the characteristic set $\Si_\z$ varies through a family of growing codimension 2 spheres, and $\hat q$ cannot concentrate near all of them. In particular, the estimate $\n{q}_{\dot X^{-1/2}_\z} \ll \n{\g}_{H^1}$ holds on average. Once this is established, uniqueness for $\g \in C^1$ follows from the standard arguments.

If $\na \g$ is unbounded, then we need to replace the $H^1_\tau$ norm on the left hand side of~\eqref{easyenergy} with an $L^p$ norm, where $p > 2$. We can obtain such an estimate using the methods of~\cite{Kenig1987}, which essentially give~\footnote{The author would like to thank Russell Brown for pointing this out.}
\begin{equation}
\label{halfkenig}
\n{u}_{2n/(n-2)} \ll \n{u}_{\dot X_\z^{1/2}}.
\end{equation}
This puts $u$ in a better $L^p$ space, but at the cost of a factor of $\tau$. Since there is no such room in~\eqref{lipschitzbound}, it seems that such a bound does not hold for $\g \in W^{1,p}$ if $p < \infty$. 

To do better, we need a refined version of~\eqref{halfkenig}. If $\hat u_\mu$ is supported in the region $\{\xi: d(\xi,\Si_\z) \sim \mu\}$, then we can replace~\eqref{halfkenig} by
\begin{equation}
\label{refinedhalfkenig}
\n{u_\mu}_{2n/(n-2)} \ll (\mu/\tau)^{1/n} \n{u_\mu}_{\dot X^{1/2}_\z}.
\end{equation}
and~\eqref{easyenergy} by
\begin{equation}
\label{refinedeasyenergy}
\n{u_\mu}_2 \ll (\mu\tau)^{-1/2} \n{u_\mu}_{X_\z^{1/2}}.
\end{equation}
Assume we are given $v_\nu$ satisfying a similar condition, with $\mu \leq \nu$. Define $f = \na \log \g$. Then
\begin{align*}
\a{\int (\na f)  u_\mu \bar v_\nu}& \ll  \n{\na f}_n (\mu/\tau)^{1/n} (\nu \tau)^{-1/2} \n{u_\mu}_{X^{1/2}_\z} \n{v_\nu}_{X^{1/2}_\z}.
\end{align*}
Now we exploit the fact that the Fourier transform of $u_\mu v_\nu$ is supported in $\{\xi: \a{\xi} \ll \tau, \a{\xi\cd e_1} \ll \nu\}$. By orthogonality, we may restrict $f$ to this region, so that the above becomes
\[\a{\int( \na f)  u_\mu \bar v_\nu} \ll \n{D^{1/2-1/n} D_1^{1/n-1/2} f}_n \n{u_\mu}_{X^{1/2}_\z} \n{v_\nu}_{X_\z^{1/2}}.\] 
where $D$ and $D_1$ are operators with symbols $\a{\xi}$ and $\a{\xi\cd e_1}$, respectively. An argument along these lines gives an estimate of the form
\[\n{m_{\na f}}_{X_\z^{1/2} \to X_\z^{-1/2}} \ll \n{D^{1/2-1/n} D_1^{1/n-1/2} f}_n.\]
Although we have lost $1/2-1/n$ derivatives in this estimate, this is counterbalanced by a gain of $1/2-1/n$ derivatives in the $e_1$ direction. This gain is useless if the Fourier support of $f$ concentrates near the plane perpendicular to $e_1$. However, we expect that this behavior does not occur on average, and we can take advantage of this by exploiting our freedom in choosing $\z$. 

It is easiest to average over all choices of $e_1\in S^{n-1}$. In $L^2$ we have 
\[\int_{e_1 \in S^{n-1}}\n{D^\al D_1^{-\al} f}_2^2 \,d\si(e_1) \ll \n{f}_2.\]
for $\al < 1/2$. Heuristically, we can interpolate this with the trivial observation that $\sup_{e_1 \in S^{n-1}} \n{f}_\infty \ll \n{f}_\infty$ to obtain 
\[\sps{\int_{e_1 \in S^{n-1}} \n{D^\b D_1^{-\b} f}_p^p \,d\si(e_1)}^{1/p} \ll \n{f}_p,\]
when $\b < 1/p$. In three dimensions, we have $1/2-1/3 < 1/3$, and we find that $\n{m_{\na f}}_{X^{1/2}_\z\to X^{-1/2}_\z}$ is bounded on average. In four dimensions, we have $1/2-1/4=1/4$. This causes a logarithmic divergence, which turns out to be harmless. For $n\geq 5$, however, we do not have a way to avoid losing derivatives.

The averaging argument in this paper is somewhat different from the argument in~\cite{Haberman2013}. There, $k$ was taken to be fixed, and $\hat q(k)$ was determined by testing against CGO solutions with $\z \sim \tau(\eta_1-i\eta_2)$, where $\eta_1$ and $\eta_2$ are perpendicular to the frequency $k$. This approach does not give control of $D_1^\b D_1^{-\b} f$, since averaging only over $\eta_1$ perpendicular to $k$ is useless if $\hat f$ is concentrated along the $k$ direction. 

Instead of fixing $k$, we vary the triple $\{k, \eta_1, \eta_2\}$ over an small open set of triples of orthonormal vectors. This set is essentially parameterized by $\{U e_1, Ue_2, Ue_3\}$, where the $e_i$ denote fixed orthonormal vectors, and $U$ is an orthogonal transformation. The idea is then to average the relevant quantities (which depend on $U$) with respect to the Haar measure on $O(n)$. Using all of the degrees of freedom in this way allows for an improvement in the estimate for $m_q$ and clarifies the estimate for $\n{q}_{X^{-1/2}_\z}$. This idea comes from~\cite{Nguyen2014}, where uniqueness is established in three dimensions for conductivities in $W^{3/2+,2}$. Remarkably, they showed that under this assumption the boundedness of $m_q$ on average can be proven without taking advantage of the curvature of $\Si_\z$. Using our framework, this corresponds (roughly speaking) to applying Bernstein's inequality to $u_\mu$ in the $e_1$ direction and Sobolev embedding to $f$ in the other directions, to obtain 
\begin{align*}
\a{\int (\na f) u_\mu \bar v_\nu} & \ll \n{\na f}_{L^2_{e_1} L^\infty_{e_2,e_3}} \n{u_\mu}_{L^\infty_{e_1} L^2_{e_2,e_3}} \n{v_\nu}_{L^2}\\
&\ll \n{\j D^{2+} f}_2\, \mu^{1/2}\n{u_\mu}_2 \n{v_\nu}_2\\
&\ll \nu^{-1/2} \tau^{-1} \n{D^{2+} f}_2 \n{u_\mu}_{X^{1/2}_\z} \n{u_\nu}_{X^{1/2}_\z}.
\end{align*}
Taking $f$ supported in $\{\a{\xi} \ll \tau, \a{\xi_1} \ll \nu\}$, we obtain
\[\a{\int( \na f ) u_\mu \bar v_\nu} \ll \n{D^{1/2}D_1^{-1/2} D^{1/2+} f}_2 \n{u_\mu}_{X^{1/2}_\z} \n{v_\nu}_{X^{1/2}_\z}, \]
and we can estimate $\n{D^{1/2}D_1^{-1/2} D^{1/2+} f}_2$ on average as before.

When $n \geq 7$, the situation is essentially the same, but there is a new technical difficulty. Since our methods are global in space, we need to extend the conductivities $\g_i \in W^{s,p}(\Om)$ to some $\g_i \in W^{s,p}(\R^n)$ which agree outside of $\Om$. When $s\leq 1+ 1/p$, we can do this as long as $\g_1=\g_2$ on the boundary. However, when $s > 1+1/p$, we also need $\pd_\nu \g_1 = \pd_\nu \g_2$ on the boundary, and there does not seem to be such a boundary identification result in the literature.

It is possible that one can relax the uniform ellipticity condition on $\g$. The natural condition to impose is then $\na \log \g \in L^n$, in which case $\log \g$ is only in BMO, which would correspond to the results of~\cite{Astala2011} in the plane. We will not address this issue, as it introduces numerous technical difficulties. We note, however, that the assumption in Theorem~\ref{estimatetheorem} is of this type.

\section{Notation}
Let $\zeta = \tau(e_1 - ie_2)$, where $e_1,e_2 \in \R^n$ are orthogonal unit vectors. Define the conjugated Laplacian
\[\lp_\z := e^{-x\cd \z} \lp e^{x\cd \z},\]
a differential operator whose symbol is
\[p_\z(\xi) := -\a \xi^2 + 2 i \z \cd \xi.\]
This symbol vanishes simply on the characteristic set
\[\Si_\zeta := \{\xi:  p_\zeta(\xi)=0\} = \{\xi: \xi_1 = 0, \a{\xi-\tau e_2} = \tau\},\]
which is a sphere of codimension two. In fact, it is not hard to check that
\begin{equation}
\label{pbehavior}
\a{p_\z(\xi)} \sim \begin{cases}
\tau d(\xi,\Si_\z) & d(\xi,\Si_\z) \leq \tau/8\\
\tau^2 + \a{\xi}^2 & d(\xi,\Si_\z) > \tau/8
\end{cases}
\end{equation}
where $d(\xi,\Si_\z) \sim \a{\xi_1} + \a{\a{\xi - \tau e_2}-\tau}$ is the distance from $\xi$ to $\Si_\z$. We will refer to this distance as the {\em modulation}.

Define the Banach spaces $\dot X^{b}_\z$ and $X^b_\z$ with norms
\begin{align*}
\n{u}_{\dot X^b_\z} & = \n{\a{p_\z(\xi)}^b \hat u}_{L^2} \\
\n{u}_{X^b_\z} & = \n{(\a{p_\z(\xi)}+\tau)^b \hat u}_{L^2} .
\end{align*}
We will use the Greek letters $\ld,\mu,\nu$ to represent dyadic integers of the form $2^{k}$, where $k \geq 0$. For $\ld > 1$ we define $E_\ld$ to be the set of $\xi$ with modulation comparable to $\ld$:
\[E_\ld := \{\xi: d(\xi,\Si_\z) \in (\ld/2,\ld]\}.\]
Similarly, for any $\ld$ we write
\[E_{\leq \ld} := \{\xi: d(\xi,\Si_\z) \leq \ld\}.\]
Since our problem is localized to a fixed compact set, the uncertainty principle implies that we need not distinguish frequencies which are separated on the unit scale. Therefore, by abuse of notation we will define $E_1 := E_{\leq 1}$. Let $m_\ld$ denote the characteristic function of $E_\ld$, and similarly for $m_{\leq \ld}$. 

Let $Q_\ld, Q_{\leq \ld}$ be the Fourier multipliers with symbols $m_\ld, m_{\leq \ld}$. We will wish to distinguish the cases $\ld \leq \tau/8$ and $\ld \gg \tau/8$, so we define projections onto low and high modulation by
\begin{align*}
Q_l & = \sum_{1\leq \ld \leq \tau/8} Q_\ld \\
Q_h & = \sum_{\ld > \tau/8} Q_\ld,
\end{align*}
where the $\ld$ vary over dyadic integers. By~\eqref{pbehavior}, we have
\begin{align}
\label{hfestimate}
\norm{Q_h u}_{H^1_\tau} \ll \norm{u}_{\dot X^{1/2}_\zeta}
\end{align}
We will also need the standard Littlewood-Paley projections. For these we choose a smooth dyadic partition of unity, i.e. a function $\chi \in C_0^\infty(\R)$ supported on $[1/2,2]$ such that 
\[1 = \sum_{k=-\infty}^\infty \chi(2^{-k} \rho).\]
for any $\rho > 0$. For a dyadic integer $\ld> 1$, we set $\chi_\ld(\xi) = \chi(\a{\xi}/\ld)$, and by abuse of notation we again set $\chi_1 = \sum_{\ld \leq 1} \chi_\ld$. The Littlewood-Paley projections $P_\ld$ are defined as the Fourier multipliers with symbols $\chi_\ld$. Given a direction $\om \in S^{n-1}$, we can also define the Littlewood-Paley projections $P^\om_\ld$ in the $\om$ direction using the Fourier multipliers $\chi(\a{\xi \cd \om}/\ld)$.
\section{Strichartz estimates}
Our goal in this section is to prove $L^p$ estimates for functions in $\dot X^{1/2}_\z$. We follow~\cite{Kenig1987}. Since the symbol $p_\z(\xi)$ is characteristic on a sphere $\Si_\z$, we begin with the Stein-Tomas restriction theorem.
\begin{theorem}[\cite{Stein1993,Tomas1975}]
Suppose $p \geq (2d+2)/(d-1)$. Let $\si$ denote the surface measure on $S^{d-1}$. Then
\[\n{\widehat{f \,d\si}}_{L^{p}(\R^d)} \ll \n{f}_{L^2(S^{d-1})}.\]
\end{theorem}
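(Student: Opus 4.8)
The plan is to pass to the dual (\emph{extension}) formulation, reduce by a $TT^*$ argument to a convolution inequality, dispose of the open range $p>(2d+2)/(d-1)$ by a dyadic decomposition in physical space together with real interpolation, and recover the endpoint $p=(2d+2)/(d-1)$ with Stein's analytic interpolation theorem.

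First I would dualize. Let $Tf := \widehat{f\,d\si}$, so that $T\colon L^2(S^{d-1})\to L^p(\R^d)$ is the operator to be bounded; its adjoint $T^*$ is, up to a harmless reflection, the restriction to $S^{d-1}$ of the inverse Fourier transform, and $TT^*$ is convolution with $\widehat{d\si}$ (using that $\widehat{d\si}$ is real and even). Since $\n{T}^2 = \n{TT^*}$, it suffices to prove
\[\n{g*\widehat{d\si}}_{L^p(\R^d)}\ll\n{g}_{L^{p'}(\R^d)}.\]
The only genuinely analytic ingredient is the decay of $K:=\widehat{d\si}$: by stationary phase applied to $\int e^{-2\pi i x\cd\xi}\,d\si(\xi)$, using that $S^{d-1}$ has nonvanishing Gaussian curvature (the phase $\xi\mapsto x\cd\xi$ restricted to the sphere has exactly two critical points, both nondegenerate), one gets $\a{K(x)}\ll (1+\a x)^{-(d-1)/2}$. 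I also record that if $\sum_{j\geq0}\psi_j\equiv1$ is a smooth partition of unity with $\psi_j$ adapted to $\{\a x\sim2^j\}$ for $j\geq1$ and $\psi_0$ supported near the origin, then the pieces $K_j:=\psi_jK$ satisfy $\n{\widehat{K_j}}_\infty\ll2^j$, since $\widehat{K_j}$ is the surface measure $\si$ mollified at scale $2^{-j}$ and $\si$ assigns mass $\ll2^{-j(d-1)}$ to any ball of radius $2^{-j}$.

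For the open range I would write $K=\sum_{j\geq0}K_j$ and interpolate two estimates on each piece: from the decay bound, $\n{K_j*g}_\infty\ll\n{K_j}_\infty\n g_1\ll2^{-j(d-1)/2}\n g_1$; from Plancherel, $\n{K_j*g}_2\ll\n{\widehat{K_j}}_\infty\n g_2\ll2^j\n g_2$. Real interpolation gives $\n{K_j*g}_p\ll2^{j\theta(p)}\n g_{p'}$ with
\[\theta(p)=\tfrac2p-\tfrac{d-1}2\sps{1-\tfrac2p}=\tfrac{d+1}p-\tfrac{d-1}2,\]
which is strictly negative precisely when $p>\tfrac{2d+2}{d-1}$; summing the geometric series over $j$ then finishes this case. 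At the endpoint $\theta(p)=0$ and the sum diverges logarithmically, so there I would instead embed $K$ into the analytic family of tempered distributions $\{\mu_z\}_{z\in\C}$ defined by $\widehat{\mu_z}(\xi)=\Gamma(z)^{-1}(1-\a{\xi}^2)_+^{z-1}\chi(\xi)$, where $\chi\in C_0^\infty$ equals $1$ near $S^{d-1}$, so that $\mu_0$ is a constant multiple of $\widehat{d\si}$. On the line $\Re z=1$ one has $\n{\widehat{\mu_z}}_\infty\ll\a{\Gamma(z)}^{-1}$, whence $g\mapsto\mu_z*g$ maps $L^2\to L^2$ with norm of admissible growth in $\a{\Im z}$; on the line $\Re z=-\tfrac{d-1}2$ the Bessel/stationary-phase asymptotics of $\mu_z$ give $\n{\mu_z}_\infty\ll\a{\Gamma(z)}^{-1}$ (the extra $\a x^{-\Re z}$ decay exactly cancelling the gap), whence $g\mapsto\mu_z*g$ maps $L^1\to L^\infty$, again with admissible growth. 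Since $z=0$ corresponds under this interpolation exactly to the exponent pair $(p',p)$ with $p=\tfrac{2d+2}{d-1}$, analytic interpolation yields the endpoint bound.

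The genuinely substantive input is the curvature estimate $\a{\widehat{d\si}(x)}\ll\a x^{-(d-1)/2}$ — the stationary-phase bound with nondegenerate Hessian — everything else ($TT^*$, real interpolation, summing a geometric series) being soft. I expect the endpoint to be the only delicate point: one must set up the analytic family so that the $L^2\to L^2$ and especially the $L^1\to L^\infty$ bounds hold with $\a{\Im z}$-growth admissible for Stein's theorem, and the latter rests once more on the stationary-phase (Bessel) asymptotics of $\mu_z$ in the regime $\Re z<0$, which is where the bookkeeping requires care.
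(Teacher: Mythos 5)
This statement is not proved in the paper at all: it is the Stein--Tomas restriction theorem, quoted with a citation to \cite{Stein1993,Tomas1975} and used as a black box (at the endpoint exponent, via Corollary~\ref{localrestriction}). So the only meaningful comparison is with the classical proofs in those references, and your sketch is precisely that classical argument: the $TT^*$ reduction to $\n{g*\widehat{d\si}}_p \ll \n g_{p'}$, the stationary-phase decay $\a{\widehat{d\si}(x)} \ll (1+\a x)^{-(d-1)/2}$, Tomas's dyadic decomposition of the kernel with the two bounds $\n{K_j}_\infty \ll 2^{-j(d-1)/2}$ and $\n{\widehat{K_j}}_\infty \ll 2^j$ (your mass-times-height count for the mollified surface measure is right), interpolation giving the exponent $\theta(p)=\tfrac{d+1}{p}-\tfrac{d-1}{2}$ and geometric summation for $p>\tfrac{2d+2}{d-1}$, and Stein's analytic family $\widehat{\mu_z}=\Gamma(z)^{-1}(1-\a\xi^2)_+^{z-1}$ for the endpoint, which is genuinely needed here since the paper applies the theorem with $p$ exactly equal to $(2d+2)/(d-1)$. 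The reduction, the exponent arithmetic, and the identification of $z=0$ with the pair $(p',p)$ at the endpoint are all correct.

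The one place where your sketch glosses over a real (if standard) technicality is the admissible-growth hypothesis in Stein's interpolation theorem: the bound you need on the line $\Re z=-\tfrac{d-1}{2}$ is not just $\n{\mu_z}_\infty\ll\a{\Gamma(z)}^{-1}$ pointwise in $z$, but a bound whose dependence on $\Im z$ is controlled uniformly, and the constants in the Bessel/stationary-phase asymptotics of $(1-\a\xi^2)_+^{z-1}$ grow with $\a{\Im z}$ because the symbol itself oscillates in $\Im z$. The usual fix is to insert a damping factor such as $e^{z^2}$ (or to track the Gamma factors in the explicit Bessel formula), after which the growth on both lines is admissible. You flag this as the delicate point, which is the right instinct; with that bookkeeping carried out, your argument is a complete and correct proof of the cited theorem.
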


Let $\tau S^{d-1}$ denote the sphere of radius $\tau$. Given a set $E$ we define its $\ld$-neighborhood
\[N_\ld(E) := \{\xi: d(\xi, E) \leq \ld\}.\]
We use the following rescaled and localized variant of the restriction theorem:
\begin{corollary}
\label{localrestriction}
Let $p$ be as above. Suppose that $\hat g$ is supported in $N_\ld(\tau S^{d-1})$, where $\ld \leq \tau/8$. Then
\[\n{g}_{L^{p}} \ll \ld^{1/2} \tau^{(d-1)/2-d/p}  \n{\hat g}_{L^2(N_\ld(\tau S^{d-1}))}\]
\end{corollary}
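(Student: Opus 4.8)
The plan is to reduce to the unit scale $\tau = 1$ by a scaling argument, then slice the annular region $N_\ld(\tau S^{d-1})$ into concentric spheres and apply the Stein--Tomas estimate on each slice, combining the slices with Minkowski's integral inequality and Cauchy--Schwarz in the radial variable.

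First I would rescale. Setting $g_\tau(x) := g(x/\tau)$, one has $\widehat{g_\tau}(\xi) = \tau^d \hat g(\tau\xi)$, which is supported in $N_{\ld/\tau}(S^{d-1})$, while $\n{g_\tau}_{L^p} = \tau^{d/p}\n{g}_{L^p}$ and $\n{\widehat{g_\tau}}_{L^2} = \tau^{d/2}\n{\hat g}_{L^2}$. Hence it suffices to prove the unit-scale estimate
\[\n{g}_{L^p} \ll \ld^{1/2}\,\n{\hat g}_{L^2(N_\ld(S^{d-1}))}\]
whenever $\hat g$ is supported in $N_\ld(S^{d-1})$ with $\ld \leq 1/8$; feeding this into the scaling identities above reproduces the claimed powers $\ld^{1/2}\tau^{(d-1)/2-d/p}$ for general $\tau$.

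For the unit-scale estimate I would write $g$ in polar coordinates on the frequency side. Putting $F_r(\om) := \hat g(r\om)$, Fourier inversion gives, up to a harmless constant,
\[g(x) = \int_{|r-1|\leq\ld} r^{d-1}\,\widehat{F_r\,d\si}(rx)\,dr,\]
where $\widehat{F_r\,d\si}$ is the extension operator on $S^{d-1}$ appearing in the Stein--Tomas theorem. By Minkowski's integral inequality, the change of variables $x \mapsto rx$ (which costs $r^{-d/p} \sim 1$ because $r \sim 1$ on the support when $\ld \leq 1/8$), and Stein--Tomas on $S^{d-1}$ with $p \geq (2d+2)/(d-1)$,
\[\n{g}_{L^p} \ll \int_{|r-1|\leq\ld} \n{\widehat{F_r\,d\si}}_{L^p}\,dr \ll \int_{|r-1|\leq\ld} \n{F_r}_{L^2(S^{d-1})}\,dr.\]
Applying Cauchy--Schwarz in $r$ over an interval of length $2\ld$ and then returning to Cartesian coordinates (again using that the radial weight $r^{d-1}$ is $\sim 1$ on the support),
\[\int_{|r-1|\leq\ld} \n{F_r}_{L^2(S^{d-1})}\,dr \leq (2\ld)^{1/2}\left(\int_{|r-1|\leq\ld} \n{F_r}_{L^2(S^{d-1})}^2\,dr\right)^{1/2} \sim \ld^{1/2}\,\n{\hat g}_{L^2(N_\ld(S^{d-1}))},\]
which is the desired unit-scale bound.

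The argument is essentially routine, and I do not expect a genuine obstacle; the only points requiring care are keeping track of the powers of $\ld$ and $\tau$ through the rescaling, and justifying the polar decomposition together with Minkowski's inequality. The latter is legitimate since $\hat g \in L^2$ already forces $F_r \in L^2(S^{d-1})$ for almost every $r$ by Fubini, and the formal manipulations can be made rigorous by first approximating $\hat g$ by Schwartz functions supported in a slightly enlarged neighborhood of $S^{d-1}$ and then passing to the limit.
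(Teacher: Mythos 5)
Your proof is correct and follows essentially the same route as the paper: decompose $\hat g$ into spherical slices in polar coordinates, apply Minkowski's inequality and the Stein--Tomas extension estimate on each slice, and finish with Cauchy--Schwarz in the radial variable. The only difference is your preliminary rescaling to $\tau=1$, which merely repackages the bookkeeping of the factors $\rho^{-d/p}$ and $\rho^{d-1}$ that the paper tracks directly at scale $\tau$.
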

\begin{proof}
By Fourier inversion, we have
\begin{align*}
g(x)& =c_d\int \hat g(\xi) e^{ix\cd \xi}\,d\xi\\
&= c_d \int_{\tau-\ld}^{\tau+\ld} \int_{S^{d-1}} \hat g(\rho \om) e^{i\rho \j{x,\om}}\,\rho^{d-1}\,d\rho\,d\si\\
&= c_d\int_{\tau-\ld}^{\tau+\ld} \rho^{d-1} (\hat g(\rho \om) \,d\si)^\vee (\rho x)\,d\rho
\end{align*}
By Minkowski's inequality, the restriction theorem, Cauchy-Schwarz, and Plancherel this implies that
\begin{align*}
\n{g}_{L^{p}} & \ll \int_{\tau-\ld}^{\tau+\ld} \n{(\hat g(\rho \om)\,d\si)^\vee(\rho x)}_{L^{p}(\R^d)} \,\rho^{d-1} \,d\rho\\
&\ll \int_{\tau-\ld}^{\tau+\ld} \rho^{-d/p} \n{(\hat g(\rho \om)\,d\si)^\vee(x)}_{L^{p}(\R^d)} \,\rho^{d-1} \,d\rho\\
\\
&\ll \tau^{-d/p}\int_{\tau-\ld}^{\tau+\ld} \n{\hat g(\rho \om)}_{L^2(S^{d-1})}\,\rho^{d-1}\,d\rho\\
&\ll \tau^{-d/p} (\ld \tau^{d-1})^{1/2} \sps{\int_{\tau-\ld}^{\tau+\ld} \n{\hat g(\rho\om)}_{L^2(S^{d-1})}^2 \,\rho^{d-1} \,d\rho}^{1/2}\\
&=\ld^{1/2} \tau^{(d-1)/2 - d/p} \n{\hat g}_{L^2(N_\ld(\tau S^{d-1}))}.
\end{align*}
\end{proof}
We deduce the following Strichartz-type estimates
\begin{lemma}
Let $p=2n/(n-2)$, $\ld\leq \tau/8$. Then\footnote{Strictly speaking, the $X^{1/2}_\z$ norm should be replaced with $\dot X^{1/2}_\z$, but this will not be important.}
\begin{align}
\label{bandstrichartz}
\n{Q_\ld f}_{p} &\ll (\ld/\tau)^{1/n} \n{f}_{X^{1/2}_\zeta}.\\
\label{strichartz}
\n{f}_p& \ll \n{f}_{X_\zeta^{1/2}}.
\end{align}
\end{lemma}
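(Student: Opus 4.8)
The plan is to deduce both estimates from the rescaled restriction estimate in Corollary~\ref{localrestriction} applied with $d=n-1$, exploiting the product structure of $\Sigma_\zeta$. The key geometric observation is that $\Sigma_\zeta = \{\xi_1 = 0\} \cap (\tau e_2 + \tau S^{n-2})$, so the modulation decomposes as $d(\xi,\Sigma_\zeta) \sim \abs{\xi_1} + \bigl|\,\abs{\xi' - \tau e_2} - \tau\,\bigr|$, where $\xi = (\xi_1, \xi')$ with $\xi' \in \R^{n-1} = e_1^\perp$. Thus on $E_\ld$ (with $\ld \le \tau/8$) we have both $\abs{\xi_1} \ll \ld$ and $\xi'$ lying in the $\ld$-neighborhood $N_\ld(\tau S^{n-2})$ of the $(n-2)$-sphere of radius $\tau$ in $e_1^\perp$. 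The strategy for \eqref{bandstrichartz} is: first freeze the $\xi_1$ variable and apply Corollary~\ref{localrestriction} in the $\xi'$ variables, then integrate back in $x_1$ and $\xi_1$ using Minkowski and Cauchy–Schwarz to account for the thin $\abs{\xi_1}\ll\ld$ slab.

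More concretely, for $Q_\ld f$ write $g_{x_1}(y') = (Q_\ld f)(x_1, y')$ as a function on $\R^{n-1}$; its partial Fourier transform in $y'$ is supported in $N_\ld(\tau S^{n-2})$ for each fixed $\xi_1$. Applying Corollary~\ref{localrestriction} with $d = n-1$ and the exponent $p = 2n/(n-2) = 2(n-1)/((n-1)-1) \cdot \frac{n-1}{n}$... — more usefully, note $p = 2n/(n-2)$ is exactly the endpoint $2d/(d-1)$ with $d = n$, which is below the Stein–Tomas threshold $(2d+2)/(d-1)$ for $d = n-1$, i.e. $2n/(n-2) \ge (2(n-1)+2)/((n-1)-1) = 2n/(n-2)$; so we are precisely at the endpoint for the $(n-1)$-dimensional restriction theorem. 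This gives, for each fixed $x_1$,
\[
\n{(Q_\ld f)(x_1,\cdot)}_{L^p_{y'}} \ll \ld^{1/2}\, \tau^{(n-2)/2 - (n-1)/p}\, \n{\widehat{(Q_\ld f)}(x_1, \cdot)}_{L^2(\xi' : N_\ld(\tau S^{n-2}))},
\]
where the Fourier transform in $x_1$ has not yet been taken. Then raise to the $p$-th power, integrate in $x_1$, use Minkowski's inequality to pull the $L^{p/2}_{x_1}$ norm inside, apply Plancherel in $x_1$ together with the support condition $\abs{\xi_1} \ll \ld$ (which costs a factor $\ld^{1/2 - 1/p}$ via Hölder, since we pass from $L^{p}_{x_1}$ to $L^2_{\xi_1}$ on a set of measure $\sim \ld$), and combine with $\n{\widehat{Q_\ld f}}_{L^2} \ll \ld^{-1/2} \n{f}_{\dot X^{1/2}_\zeta}$ coming from $\abs{p_\zeta(\xi)} \sim \tau\ld$ on $E_\ld$ (see \eqref{pbehavior}). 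Carefully bookkeeping the powers of $\ld$ and $\tau$ — the $\ld^{1/2}$ from restriction, $\ld^{1/2-1/p}$ from the slab, $\ld^{-1/2}$ from the $\dot X^{1/2}_\zeta$ conversion — should collapse to the claimed $(\ld/\tau)^{1/n}$, using $1/p = (n-2)/(2n)$. Finally \eqref{strichartz} follows by summing \eqref{bandstrichartz} over dyadic $\ld \le \tau/8$ (the geometric series in $(\ld/\tau)^{1/n}$ converges) and handling the high-modulation piece $Q_h f$ via \eqref{hfestimate} together with the Sobolev embedding $\n{u}_{L^{2n/(n-2)}} \ll \n{u}_{H^1} \ll \n{Q_h f}_{H^1_\tau}$.

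The main obstacle I anticipate is the bookkeeping of the $\ld$ and $\tau$ powers when interchanging the $x_1$-integration with the restriction estimate: one must be careful that Minkowski's inequality is applied in the correct direction (it requires $p \ge 2$, which holds) and that the Hölder step converting $L^p_{x_1}$-control to $L^2_{\xi_1}$-control on the $\ld$-slab is legitimate — strictly this uses that $\widehat{Q_\ld f}$ is supported where $\abs{\xi_1} \lesssim \ld$, which follows from the modulation bound. A secondary subtlety, flagged in the footnote to the lemma, is the harmless discrepancy between the $X^{1/2}_\zeta$ and $\dot X^{1/2}_\zeta$ norms: on $E_\ld$ with $\ld \ge 1$ one has $\abs{p_\zeta(\xi)} \sim \tau\ld \gtrsim \tau$, so $(\abs{p_\zeta(\xi)} + \tau) \sim \abs{p_\zeta(\xi)}$ and the two norms agree up to constants on the relevant frequency region, so it suffices to prove the estimate with $\dot X^{1/2}_\zeta$ and quote equivalence for localized $f$.
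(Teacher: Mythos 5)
Your proposal is correct and follows essentially the same route as the paper: the slab bound $\a{\xi_1}\ll\ld$ handled by a Bernstein/Young-type step in $x_1$, Corollary~\ref{localrestriction} with $d=n-1$ at the endpoint $p=2n/(n-2)$ applied in the $x'$ variables for fixed $x_1$, Plancherel together with $\a{p_\z(\xi)}\sim\tau\ld$ on $E_\ld$, and then dyadic summation for $Q_l$ plus \eqref{hfestimate} and Sobolev embedding for $Q_h$ (the paper merely performs the slab step before the restriction step rather than after). One minor slip in your bookkeeping: the $L^2$ conversion gives $\n{\widehat{Q_\ld f}}_{2}\ll(\tau\ld)^{-1/2}\n{f}_{X^{1/2}_\z}$, not $\ld^{-1/2}$; with that factor the powers do collapse to $(\ld/\tau)^{1/n}$ as you claim.
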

\begin{proof}
By a change of coordinates, we may assume $e_1 = (1,0,\dotsc, 0)$. We use the notation $\xi = (\xi_1,\xi')$.

For~\eqref{bandstrichartz}, write $g = Q_\ld f$. Note that
\[E_\ld \subset \{\xi: \a{\xi_1} \leq c\ld, \a{\a{\xi' - \tau e_2'}-\tau} \leq c\ld\}.\]
We can write $g = \phi_\ld *_{x_1} g$, where $\phi_\ld(x_1) = \ld \phi(\ld x_1)$ for some Schwartz $\phi$ and the convolution is taken in the $x_1$ variable only. By Minkowski's inequality and Young's inequality, we have
\begin{align*}
\n{g}_{p} & = \ns{\int \phi_\ld(x_1-y_1) g(y_1,x')\,dy_1}_{p} \\
&\ll \ns{\int \a{ \phi_\ld (x_1-y_1) } \n{g(y_1)}_{L^{p}_{x'}}\,dy_1}_{L^{p}_{x_1}}\\
&\ll \ld^{1/2-1/p} \n{g}_{L^2_{x_1} L^p_{x'}}.
\end{align*}
If we regard $g$ as a function in the $x'$ variable, we see that its Fourier transform lies in $N_{c\ld}(\tau S^{n-2} + \tau e_2')$. By Corollary~\ref{localrestriction} and translation invariance, we have $\n{g(x_1)}_{L^p_{x'}} \ll \ld^{1/2} \tau^{(n-2)/(2n)}\n{\hat g(x_1)}_{L^2_{x'}}$ for each $x_1$. It follows that
\[\n{g}_p \ll \ld^{1/2} \ld^{1/2-1/p} \tau^{1/2-1/n} \n{\hat g}_{2} \ll \ld^{1/n} \tau^{-1/n} \n{f}_{X^{1/2}_\zeta}. \]

For~\eqref{strichartz}, we apply~\eqref{bandstrichartz} near $\Si_\z$ and Sobolev embedding away from $\Si_\z$. On $E$ we have
\begin{align*}
\n{Q_{l} f}_p & \ll \sum_{1 \leq \ld \leq \tau/8} \n{Q_\ld f}_p \\
&\ll \sum_{1 \leq \ld \leq \tau/8} (\ld/\tau)^{1/n}\n{Q_\ld f}_{X^{1/2}_\z}\\
&\ll \sps{\sum_{1\leq \ld \leq \tau/8} \n{Q_\ld f}_{X_\z^{1/2}}^2}^{1/2} \\
&\leq \n{f}_{X_\z^{1/2}}
\end{align*}
Away from $E$ we have 
\[\n{Q_{h}f}_p \ll \n{Q_{h}f}_{H^1} \ll \n{f}_{X^{1/2}_\z}.\]
by~\eqref{hfestimate}. Combining these estimates gives the claimed inequality.
\end{proof}

\section{Bilinear estimates}
Given a tempered distribution $f\in \mc S'(\R^n)$, define the map $m_f: \mc S(\R^n) \to \mc S'(\R^n)$ by $m_f \phi := f \phi$. We would like to control $\n{m_f}_{X^{1/2}_\z\to X^{-1/2}_\z}$. By duality, this is equivalent to establishing a bilinear estimate of the form
\[\a{m_f(u,v)} \ll \n{u}_{X^{1/2}_\z} \n{v}_{X^{1/2}_\z},\]
where
\[m_f(u,v) = \j{m_f u, v}.\]
Suppose that $f \in L^{n/2}$. By~\eqref{strichartz}, we have
\begin{align}
\a{m_f(u,v)} & \ll \n{f}_{n/2} \n{u}_{2n/(n-2)} \n{v}_{2n/(n-2)}\\
&\ll \n{f}_{n/2} \n{u}_{X^{1/2}_\z} \n{v}_{X^{1/2}_\z}.
\label{lowerorder}
\end{align}
We also have
\begin{equation}
\a{m_f(u,v)} \ll \n{f}_\infty \n{u}_2 \n{v}_2 \ll \tau^{-1} \n{f}_\infty \n{u}_{X^{1/2}_\z} \n{v}_{X^{1/2}_\z}.
\label{linfinity}
\end{equation}
A more difficult task is to control $m_{\na f}$. We record the main computation in the following lemma:
\begin{lemma}
\label{calculation}
Let $s,p,\th$ be as in Theorem~\ref{maintheorem}. Let $1/q = 1/2-1/p$. There is some $\al > 0$ such for fixed $\ld \leq 100\tau$, we have
\begin{align}
\label{leq}
\sum_{\substack{\mu\leq \nu\leq \tau/8\\\nu <\ld}}  (\nu/\ld)^{(1-\th)/n}  \n{Q_\mu u}_q\n{Q_\nu v}_2 &\ll (\ld/\tau)^\al \ld^{s-2} \n{u}_{X^{1/2}_\z} \n{v}_{X^{1/2}_\z},
\end{align}
and
\begin{align}
\label{geq}
\sum_{\substack{\mu \leq \nu\leq \tau/8 \\ \ld \leq \nu < \tau/8} }\n{Q_\mu u}_q \n{Q_\nu v}_2 &\ll \ld^{-1}(\ld/\tau)^\al \n{u}_{X^{1/2}_\z} \n{v}_{X^{1/2}_\z}.
\end{align}
\end{lemma}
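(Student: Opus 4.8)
The plan is to estimate each term $\n{Q_\mu u}_q \n{Q_\nu v}_2$ using the Strichartz-type bounds of the previous section, insert the decay factors, and sum the resulting geometric-type series in $\mu$ and $\nu$. First I would record the two building blocks. For the $L^q$ norm of $Q_\mu u$: interpolating the band Strichartz estimate~\eqref{bandstrichartz} (at $p=2n/(n-2)$, i.e. the endpoint $q_0 = 2n/(n-2)$) against the trivial $L^2$ bound $\n{Q_\mu u}_2 \ll (\mu\tau)^{-1/2}\n{u}_{X^{1/2}_\z}$ coming from~\eqref{pbehavior}, one gets, for $1/q = 1/2 - 1/p$ with $p$ slightly larger than $n$ (so $q$ slightly larger than $2$),
\[
\n{Q_\mu u}_q \ll \mu^{a}\tau^{-b}\n{u}_{X^{1/2}_\z}
\]
for exponents $a,b$ depending on $p$; the key qualitative features are $a<1/2$ (so the sum over $\mu\le\nu$ converges, even after the extra $\mu^{\ve}$ one can afford), and $b$ close to $1/2$. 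For $\n{Q_\nu v}_2$ I would just use $\n{Q_\nu v}_2 \ll (\nu\tau)^{-1/2}\n{v}_{X^{1/2}_\z}$.

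Next, for~\eqref{leq}: substituting the two bounds, the summand becomes $(\nu/\ld)^{(1-\th)/n}\,\mu^{a}\tau^{-b}(\nu\tau)^{-1/2}\n{u}_{X^{1/2}_\z}\n{v}_{X^{1/2}_\z}$. The sum over $\mu\le\nu$ is dominated by its top term $\mu=\nu$ since $a<1/2$, leaving $\sum_{\nu<\ld}\nu^{a-1/2}(\nu/\ld)^{(1-\th)/n}\tau^{-b-1/2}$. The factor $(\nu/\ld)^{(1-\th)/n}$ is a positive power of $\nu$ and $\nu<\ld$, so this geometric series is controlled by $\nu=\ld$, giving $\ld^{a-1/2}\tau^{-b-1/2}$, which one then rewrites as $(\ld/\tau)^{\al}\ld^{s-2}$ for a suitable $\al>0$ by choosing $p$ (equivalently $1-\th$) appropriately — this is exactly where the relation between $(s,p)$ and $\th$ in Theorem~\ref{maintheorem} gets used, and where the dimensional restriction $n\le 6$ enters, since one needs $1/2-1/n$ (the derivative loss) to be beaten by $1/n$ (the curvature gain in the $e_1$-direction), i.e. $1/2-1/n<1/n$, and more generally the exponent bookkeeping must leave a genuine positive power of $\ld/\tau$. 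For~\eqref{geq} the argument is the same but now $\nu$ ranges over $\ld\le\nu<\tau/8$ with no $(\nu/\ld)$ gain; here one relies on $a-1/2<0$ so that $\sum_{\nu\ge\ld}\nu^{a-1/2}\ll \ld^{a-1/2}$, and again repackages $\ld^{a-1/2}\tau^{-b-1/2}$ as $\ld^{-1}(\ld/\tau)^{\al}$ — the exponent of $\ld$ here being forced to be $-1$ reflects the single derivative in $m_{\na f}$.

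I expect the main obstacle to be the exponent arithmetic rather than any hard analysis: one must track the interpolation parameter between~\eqref{bandstrichartz} and the $L^2$ bound, relate it to $q$ (hence to $p$, hence to $1-\th$), and verify that after summing both geometric series the leftover powers of $\ld$ and $\tau$ can simultaneously be arranged as $(\ld/\tau)^{\al}\ld^{s-2}$ in~\eqref{leq} and $\ld^{-1}(\ld/\tau)^{\al}$ in~\eqref{geq} with one and the same $\al>0$. The borderline case $n=4$, where $1/2-1/n=1/n$, will need a $\mu^{\ve}$-type logarithmic cushion in the summation, and for $n=5,6$ one must check that the two-parameter family $(s,p,\th)$ in the statement is precisely what makes $\al>0$ achievable; that case-checking, together with the endpoint $\mu=\nu$ in the inner sum, is where care is required. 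The restriction $\mu,\nu\le\tau/8$ keeps us in the low-modulation regime where~\eqref{bandstrichartz} applies, so no separate high-modulation analysis is needed inside this lemma.
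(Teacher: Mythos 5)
This is exactly the paper's proof strategy: interpolate \eqref{bandstrichartz} with the trivial energy bound $\n{Q_\mu u}_2 \ll (\mu\tau)^{-1/2}\n{Q_\mu u}_{X^{1/2}_\z}$ to get $\n{Q_\mu u}_q \ll (\mu/\tau)^{(1-\th)/n}(\mu\tau)^{-\th/2}\n{Q_\mu u}_{X^{1/2}_\z}$, combine with the $L^2$ bound for $v$, and sum the dyadic series. One cautionary note on the bookkeeping you deferred: the "dominated by the top term $\mu=\nu$" step requires $\b:=(1-\th)/n-\th/2>0$, which can fail for $n=5,6$ with $\th$ close to $1$; the paper treats $\b\le 0$ as a separate case (absorbing a harmless $\log^2\tau$ into $(\ld/\tau)^\al$), and it also keeps the $\n{Q_\mu u}_{X^{1/2}_\z}$, $\n{Q_\nu v}_{X^{1/2}_\z}$ factors so that a discrete Young/Schur argument recovers $\n{u}_{X^{1/2}_\z}\n{v}_{X^{1/2}_\z}$ exactly, whereas crudely replacing each projection by the full norm as in your sketch costs further logarithms (likewise absorbable into $\al$).
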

\begin{proof}
We may interpolate~\eqref{bandstrichartz} with the trivial estimate $\n{Q_\mu u}_2 \ll (\mu \tau)^{-1/2} \n{Q_\mu u}_{X^{1/2}_\z}$ to obtain
\[\n{Q_\mu u}_q \ll (\mu/\tau)^{(1-\th)/n} (\mu \tau)^{-\th/2} \n{Q_\mu u}_{X_\z^{1/2}}\]
where $1/q = 1/2-1/p$ and $\th$ is such that $p=n/(1-\th)$. Combining this with the trivial $L^2$ estimate for $v$, we obtain
\begin{align*}
\n{Q_\mu u}_q \n{Q_\nu v}_2& \ll B_{\mu,\nu}\n{Q_\mu u}_{X^{1/2}_\z} \n{Q_\nu v}_{X^{1/2}_\z}
\end{align*}
where 
\[B_{\mu, \nu} := \tau^{-(1-\th)/n - \th/2-1/2} \mu^{(1-\th)/n-\th/2} \nu^{-1/2}.\]
Set
\[\b: = \frac{1-\th}{n} - \frac{\th}{2}.\]
Suppose first that $\b > 0$. When $\nu \geq \ld$ we have
\begin{align*}
B_{\mu,\nu}&\ll \tau^{-(1-\th)/n - \th/2-1/2} \ld^{(1-\th)/n - \th/2-1/2} (\mu/\nu)^{\b} \\
&=\ld^{-\th-1} (\ld/\tau)^{(1-\th)/n + \th/2 + 1/2} (\mu/\nu)^{\b},
\end{align*}
We take $\al \leq (1-\th)/n+(1+\th)/2$ and use the discrete Young's inequality to establish~\eqref{geq}.

Suppose now that $\nu < \ld$. When $n=3$, we set $\th = 0$,
\begin{align*}
(\nu/\ld)^{1/3}  B_{\mu,\nu} &= (\nu/\ld)^{1/3} \tau^{-5/6} \mu^{1/3} \nu^{-1/2}\\
&=(\mu/\nu)^{1/3} (\nu/\tau)^{1/6}   (\ld/\tau)^{2/3}\ld^{-1}.
\end{align*}
By Young's inequality we have~\eqref{leq} for $\al\leq 2/3$. When $n = 4$ we take $\th$ to be zero and obtain
\begin{align*}
(\nu/\ld)^{1/4} B_{\mu,\nu} & = (\nu/\ld)^{1/4} \tau^{-3/4} \mu^{1/4} \nu^{-1/2}\\
&= (\mu/\nu)^{1/4}(\ld/\tau)^{3/4} \ld^{-1}.
\end{align*}
Applying Young's inequality we have~\eqref{leq} for $\al\leq 3/4$.

When $n > 4$, we have
\begin{align*}
(\nu/\ld)^{(1-\th)/n} B_{\mu,\nu} & = (\mu/\nu)^\b \nu^{-1/2+2(1-\th)/n - \th/2} \ld^{s-2} (\ld/\tau)^{(1-\th)/n+\th/2+1/2}.
\end{align*}
In this case we have~\eqref{leq} for $\al \leq (1-\th)/n+\th/2+ 1/2$ 

In higher dimensions, we also want to consider the case $(1-\th)/n - \th/2 \leq 0$. For $\nu \geq \ld$ we have
\begin{align*}
B_{\mu,\nu} &\leq \mu^\b \ld^{-1/2} \tau^{-(1-\th)/n-\th/2-1/2}\\
&\ll \ld^{-1} \tau^{-(1-\th)/n-\th/2}.
\end{align*}
Then we have~\eqref{geq} for $\al < (1-\th)/n + \th/2$, since there are only $\sim \log \tau$ possible values of $\mu,\nu$. 

For $\ld \geq \nu$ we have
\begin{align*}
(\nu/\ld)^{(1-\th)/n} B_{\mu,\nu} & \ll \nu^{(1-\th)/n-1/2} \ld^{-2(1-\th)/n-\th/2-1/2} (\ld/\tau)^{(1-\th)/n+\th/2+1/2}.
\end{align*}
Thus we have~\eqref{leq} for $\al \leq  (1-\th)/n + \th/2 + 1/2$.
\end{proof}

Let $P_\ld$ denote the Littlewood-Paley projections, and let $P^1_\mu$ denote the Littlewood-Paley projections in the $e_1$ direction. Then
\begin{lemma}
\label{mqlemma}
Let $s,p$ be as in Theorem~\ref{maintheorem}. Then for any $f \in W^{s-1,p}(\R^n) \cap L^n(\R^n)$, 
\begin{align*}
\n{m_{\na f}}_{X^{1/2}_\z \to X^{-1/2}_\z} & \ll \n{f}_n + \sup_{\nu\leq \ld \leq 100\tau} (\ld/\tau)^{\b} (\ld/\nu)^{1/p} \ld^{s-1} \n{P_\ld P^1_{\leq 8\nu}f}_p ,
\end{align*}
where $\b>0$
\end{lemma}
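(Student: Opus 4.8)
The plan is to estimate $m_{\na f}(u,v) = \j{m_{\na f}u, v}$ by decomposing $u = \sum_\mu Q_\mu u$, $v = \sum_\nu Q_\nu v$ into modulation pieces and $f$ into Littlewood--Paley pieces $P_\ld f$. By symmetry we may assume $\mu \leq \nu$ throughout. For each fixed $\ld$ we integrate by parts to move the derivative off $f$, writing $\int (\na P_\ld f) u_\mu \bar v_\nu = -\int (P_\ld f)\na(u_\mu \bar v_\nu)$, so the derivative lands on $u_\mu$ or $v_\nu$. Here the crucial geometric observation is that the Fourier support of $u_\mu \bar v_\nu$ lies in $\{\a{\xi\cd e_1} \ll \nu\}$ (since each of $u_\mu, v_\nu$ has Fourier support with $\a{\xi_1}\ll$ the respective modulation, and $\nu$ is the larger), so by orthogonality we may insert the projection $P^1_{\leq 8\nu}$ in front of $P_\ld f$ for free. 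This is what produces the $P_\ld P^1_{\leq 8\nu} f$ appearing in the statement.

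The next step is to split into the regime where the modulations are small, $\nu \leq \tau/8$, versus where at least one is large. For the large-modulation terms one uses~\eqref{hfestimate} (equivalently~\eqref{strichartz} with room to spare): $Q_h$ pieces sit in $H^1_\tau$ with a gain, and these contributions are absorbed into the $\n{f}_n$ term via~\eqref{lowerorder}-type estimates combined with Sobolev embedding for $f$ (here one uses $W^{s-1,p}\cap L^n$; the $L^n$ norm handles the top-order piece $\na f$ when $f$ is merely in $L^n$ after the derivative is distributed, i.e. the genuinely lower-order contributions). For the main term, where both modulations are $\leq \tau/8$, we apply H\"older with exponents $q, p, 2$ where $1/q = 1/2 - 1/p$ (so $1/q + 1/p + 1/2 = 1$), bounding $\a{\int (P_\ld P^1_{\leq 8\nu}f)\,\na(u_\mu\bar v_\nu)}$ by $\n{P_\ld P^1_{\leq 8\nu}f}_p$ times $\n{Q_\mu u}_q \n{Q_\nu v}_2$ (times $\ld$ or $\nu$ from the derivative, with $\ld \gtrsim \nu$ in the relevant range since $P_\ld f$ only sees frequencies $\gtrsim \ld$ and the output frequency is controlled). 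To sum over $\mu, \nu$ we invoke Lemma~\ref{calculation}: inequality~\eqref{leq} controls the terms with $\nu < \ld$ and~\eqref{geq} those with $\nu \geq \ld$, both with a power gain $(\ld/\tau)^\al$. The factor $(\ld/\nu)^{1/p}$ in the statement comes from converting the gain $(\nu/\ld)^{(1-\th)/n}$ in~\eqref{leq} (noting $1/p = (1-\th)/n$) into the form needed, and the $\ld^{s-1}$ absorbs the derivative loss $\ld^{s-2}$ from~\eqref{leq} together with the one derivative from integration by parts.

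Finally one sums over dyadic $\ld \leq 100\tau$. Since each $\ld$-term carries a factor $(\ld/\tau)^\al$ with $\al > 0$, the geometric series in $\ld$ converges and is dominated by its largest term; this is precisely why the right-hand side of Lemma~\ref{mqlemma} is a supremum over $\nu \leq \ld \leq 100\tau$ of $(\ld/\tau)^\b (\ld/\nu)^{1/p}\ld^{s-1}\n{P_\ld P^1_{\leq 8\nu}f}_p$ rather than a sum, with $\b$ chosen to be some positive number no larger than $\al$. One must also account for the frequencies of $u_\mu \bar v_\nu$ that exceed $\tau$ (recall~\eqref{pbehavior} allows $\a\xi \gg \tau$ at high modulation); in that range $\ld$ can be as large as $\sim 100\tau$, which is why the supremum runs up to $100\tau$ and not just $\tau/8$.

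The main obstacle is the careful bookkeeping of which modulation piece absorbs the integrated-by-parts derivative and the attendant verification that $\ld \gtrsim \nu$ on the support of the relevant trilinear interaction --- this is what legitimizes pulling out $\ld^{s-1}$ rather than a worse power, and it is the step where the restriction $3 \leq n \leq 6$ (via the sign and size of $\b = (1-\th)/n - \th/2$ in Lemma~\ref{calculation}) is really being used. A secondary technical point is making sure the "lower order terms" --- the pieces of the contribution where the derivative stays on $u$ or $v$ but the modulation is high, and the genuinely zeroth-order part of $q$ --- are all genuinely controlled by $\n{f}_n$ alone via~\eqref{lowerorder} and~\eqref{linfinity}, with no residual $\tau$-growth.
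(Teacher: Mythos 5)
Your decomposition (high/low modulation, then Littlewood--Paley in $f$ and modulation pieces in $u,v$), your insertion of $P^1_{\leq 8\nu}$ by orthogonality, the use of H\"older with $1/q = 1/2 - 1/p$, and the reduction to Lemma~\ref{calculation} with the sum-to-sup conversion --- all of this matches the paper's argument. But there is a genuine gap in the main (low-modulation) term. You integrate by parts to write $\int (\na P_\ld f)\, Q_\mu u\, \bar{Q_\nu v} = -\int (P_\ld f)\,\na(Q_\mu u\, \bar{Q_\nu v})$ and then assert that the resulting derivative contributes a factor $\lesssim \ld$ because ``$P_\ld f$ only sees frequencies $\gtrsim \ld$ and the output frequency is controlled.'' That is true on the Fourier side if you keep the product together, but the next step is H\"older with exponents $(p,q,2)$, which is performed in physical space, and there you must first apply Leibniz: $\na(Q_\mu u\, \bar{Q_\nu v}) = (\na Q_\mu u)\bar{Q_\nu v} + Q_\mu u\,\overline{\na Q_\nu v}$. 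Each of $Q_\mu u$, $Q_\nu v$ has Fourier support inside the low-modulation region $E_{\leq \tau/8}$, which contains points with $\abs{\xi}\sim\tau$ (the characteristic sphere $\Si_\z$ has radius $\tau$), so $\n{\na Q_\mu u}_q$ carries a factor $\tau$, not $\ld$. You end up paying $\tau\ld^{s-2}$ rather than $\ld^{s-1}$, which destroys the estimate unless you supply a paraproduct-type decomposition or a multiplier argument to justify replacing $\tau$ by $\ld$ --- none of which appears in your sketch.

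The paper avoids the issue entirely by \emph{not} integrating by parts in the $m_{\na f}(Q_l u, Q_l v)$ term: it keeps the derivative on $f$ and estimates $\n{P^1_{\leq 8\nu}\na P_\ld f}_p \sim \ld \n{P_\ld P^1_{\leq 8\nu} f}_p$, from which the factor $\ld$ is immediate. Integration by parts is used only in the high-modulation pieces $m_{\na f}(Q_h u, \cdot)$, where landing the derivative on $Q_h u$ (or $Q_h v$) is safe because $\n{Q_h \na u}_2 \ll \n{Q_h u}_{H^1_\tau} \ll \n{u}_{X^{1/2}_\z}$ by~\eqref{hfestimate}; there one pairs $f\in L^n$ with $L^2$ and $L^{2n/(n-2)}$ factors using~\eqref{strichartz}. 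Your high-modulation discussion is essentially right; the fix is simply to delete the integration by parts in the main term and estimate $\n{\na P_\ld P^1_{\leq 8\nu} f}_p$ directly.
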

\begin{proof}
Write
\[m_{\na f}(u,v) = m_{\na f} (Q_h u, Q_h v) + m_{\na f}(Q_h u, Q_l v) + m_{\na f}(Q_l u, Q_h v) + m_{\na f}(Q_l u, Q_l v).\]
We can treat all but the last term using~\eqref{hfestimate},~\eqref{strichartz}. Integrating by parts,
\begin{align*}
\a{m_{\na f}(Q_h u,Q_h v)} & \ll \n{f}_n \n{Q_h \na u}_2 \n{Q_h v}_{2n/(n-2)} + \n{f}_n \n{Q_h u}_{2n/(n-2)} \n{Q_h \na v}_2 \\
&\ll \n{f}_n \n{u}_{X^{1/2}_\z} \n{v}_{X^{1/2}_\z}.
\end{align*}
Since $Q_l v$ is supported in $\a{\xi} \ll \tau$,
\begin{align*}
\a{m_{\na f}(Q_h u, Q_l v)} & \ll \n{f}_n \n{Q_h \na u}_2 \n{Q_l v}_{2n/(n-2)} + \n{f}_n \n{Q_h u}_{2} \n{Q_l \na v}_{2n/(n-2)} \\
&\ll \n{f}_n  \n{Q_h u}_{H^1_\tau} \n{Q_l v}_{2n/(n-2)} \\
&\ll \n{f}_n \n{u}_{X^{1/2}_\z} \n{v}_{X^{1/2}_\z}.
\end{align*}

It remains to estimate $m_{\na f}(Q_l u,Q_l v)$. We have
\begin{align}
\label{qsum}
m_{\na f}(Q_l u, Q_l v) & = \sum_{\mu,\nu,\ld}\int (\na P_\ld f)\, Q_\mu u \, \bar{Q_\nu v}\,dx.
\end{align}
Suppose $\mu \leq \nu$ (the case $\mu > \nu$ is identical). Because $Q_\mu u\, \bar{Q_\nu v}$ has Fourier support in $\{\xi: \a{\xi_1} \leq 2 \nu\}$, Plancherel's theorem and H\"older's inequality give
\begin{align*}
\as{\int (\na P_\ld f)\, Q_\mu u \, \bar{Q_\nu v}\,dx} & = \as{\int P^1_{\leq 8 \nu}( \na P_\ld f)\, Q_\mu u\,\bar{Q_\nu v}\,dx}\\
& \ll \n{P^1_{\leq 8 \nu} \na P_\ld f}_p \n{Q_\mu u}_q \n{Q_\nu v}_2.
\end{align*}
Furthermore, since $Q_\mu u \, \bar{Q_\nu v}$ has Fourier support in $\{\a{\xi} \ll 100\tau\}$, we can assume $\ld \leq 100\tau$ in this sum. Applying Lemma~\ref{calculation},
we get
\begin{align*}
\a{m_{\na f} (Q_l u, Q_l v)} & \ll \sum_{\substack{\nu\geq \ld\\\mu\leq \nu}}  \n{\na P_\ld f}_p \n{Q_\mu u}_q \n{Q_\nu v}_2 \\
&\qq+ \sum_{\substack{\nu < \ld\leq 100\tau\\\mu\leq\nu}} (\ld/\nu)^{1/p} (\nu/\ld)^{1/p} \n{\na P_{\ld} P^1_{\leq 8\nu}f}_p \n{Q_\mu u}_q \n{Q_\nu v}_2\\
&\ll \sum_{\ld\leq 100\tau} \{(\ld/\tau)^\al \ld^{-1} \n{\na P_\ld f}_p \\
&\qq\qq+ \sup_{\nu\leq \ld} (\ld/\tau)^{\al} (\ld/\nu)^{1/p} \ld^{s-2} \n{\na P_\ld P^1_{\leq 8\nu} f}_p\}\\
&\qq\times\n{u}_{X^{1/2}_\z} \n{v}_{X^{1/2}_\z}\\
&\ll (\n{f}_p +  \sup_{\nu\leq \ld\leq 100\tau}(\ld/\tau)^{\al/2} (\ld/\nu)^{1/p} \ld^{s-1} \n{P_\ld P^1_{\leq 8\nu} f}_p)\\
&\qq\times\n{u}_{X^{1/2}_\z} \n{v}_{X^{1/2}_\z}.
\end{align*}
\end{proof}

\section{Averaging}
Given any vector $\om\in S^{n-1}$, we define $P^\om_{\mu}$ to be Littlewood-Paley projection in $\om$ direction. Let $\mu$ denote Haar measure on $O(n)$, normalized so that if $\si$ is the usual spherical measure on $S^{n-1}$ and $f:S^{n-1} \to \R$ is integrable, then for any $\th\in S^{n-1}$ we have
\begin{equation}
\label{haarformula}
\int_{O(n)} f(U\th)\,d\mu(U) = \int_{S^{n-1}} f(\om) \,d\si(\om).
\end{equation}

\begin{lemma}
\label{planeavg}
Suppose $p \in [2,\infty]$. Let $f \in L^p(\R^n)$. For $U \in O(n)$ and $\nu \leq \ld$, define
\[A_{\ld,\nu}(U) = (\ld/\nu)^{1/p}\n{P_\ld P_{\leq \nu}^{Ue_1} f}_p.\]
Then 
\[\n{A_{\ld,\nu}}_{L^p(O(n))} \ll \n{f}_p\]
\end{lemma}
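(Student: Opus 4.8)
The plan is to regard $f\mapsto\big(U\mapsto P_\ld P_{\leq\nu}^{Ue_1}f\big)$ as a single linear operator and to obtain the bound by interpolating between $p=2$ and $p=\infty$. Concretely, set $Tf(U,x):=(P_\ld P_{\leq\nu}^{Ue_1}f)(x)$, viewed as a function on $O(n)\times\R^n$; by Fubini one has $\n{A_{\ld,\nu}}_{L^p(O(n))}=(\ld/\nu)^{1/p}\n{Tf}_{L^p(O(n)\times\R^n)}$, so it is enough to show that $T\colon L^p(\R^n)\to L^p(O(n)\times\R^n)$ has operator norm $\ll(\nu/\ld)^{1/p}$ for $p=2$ and $p=\infty$. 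The remaining exponents then follow from Riesz--Thorin interpolation in $p$: writing $\tfrac1p=\tfrac{1-\th}{2}$, i.e.\ $\th=1-\tfrac2p$, the endpoint norms combine as $\big((\nu/\ld)^{1/2}\big)^{1-\th}\cdot1^{\th}=(\nu/\ld)^{1/p}$, which is precisely the factor cancelling the weight $(\ld/\nu)^{1/p}$ in the definition of $A_{\ld,\nu}$.

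The endpoint $p=\infty$ is soft: for each fixed $U$, the multiplier $P_\ld P_{\leq\nu}^{Ue_1}$ is convolution against a kernel that is the product of an $L^1$-normalized bump at scale $\ld$ and one at scale $\nu$ in the $Ue_1$ direction, hence has $L^1$ norm bounded uniformly in $\ld,\nu,U$; so $\sup_U\n{P_\ld P_{\leq\nu}^{Ue_1}f}_\infty\ll\n{f}_\infty$ (and $(\nu/\ld)^0=1$). The content is the $p=2$ estimate. Since the symbol of $P_{\leq\nu}^{Ue_1}$ is bounded by $1$ and supported in the slab $\{\a{\xi\cd Ue_1}\ll\nu\}$, Plancherel and Fubini give
\[\n{Tf}_{L^2(O(n)\times\R^n)}^2\ll\int_{\R^n}\a{\hat f(\xi)}^2\,\mathbf{1}_{\{\a\xi\sim\ld\}}\,\Big(\int_{O(n)}\mathbf{1}_{\{\a{\xi\cd Ue_1}\ll\nu\}}\,d\mu(U)\Big)\,d\xi .\]
For fixed $\xi$ with $\a\xi\sim\ld$, I would write $\xi\cd Ue_1=\a\xi\,(U^{-1}\hat\xi)\cd e_1$, use the invariance of $\mu$ under $U\mapsto U^{-1}$ to replace $U^{-1}\hat\xi$ by $U\hat\xi$, and then apply~\eqref{haarformula} with the unit vector $\hat\xi$ to rewrite the inner integral as $\si\big(\{\om\in S^{n-1}:\a{\om\cd e_1}\ll\nu/\a\xi\}\big)$. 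Since $\nu/\a\xi\ll\nu/\ld\leq1$, this is the spherical measure of an equatorial slab of width $\ll\nu/\ld$, hence is $\ll\nu/\ld$; substituting back gives $\n{Tf}_{L^2(O(n)\times\R^n)}^2\ll(\nu/\ld)\n{f}_2^2$.

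I expect the geometric $p=2$ step to be the only point requiring care: one must average over the correct argument of the rotation --- which is why the inversion invariance of Haar measure is invoked before quoting~\eqref{haarformula} --- and bound the measure of a thin equatorial slab on $S^{n-1}$, and it is this slab estimate that produces the crucial gain $(\nu/\ld)^{1/p}$. The $p=\infty$ bound, the Riesz--Thorin step, and checking that $P_{\leq\nu}^{Ue_1}$ is dominated in modulus by the indicator of its Fourier support are all routine.
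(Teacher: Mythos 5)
Your proposal is correct and follows essentially the same route as the paper: the same operator $Tf(U,x)=P_\ld P_{\leq\nu}^{Ue_1}f(x)$, interpolation between the trivial $L^\infty$ endpoint and an $L^2$ endpoint obtained via Plancherel, Fubini, and the Haar-measure identity~\eqref{haarformula}, with the gain coming from the measure $\ll\nu/\ld$ of the equatorial slab. The only cosmetic difference is that you invoke inversion invariance of Haar measure before applying~\eqref{haarformula}, whereas the paper applies it directly (with $\th=e_1$) and evaluates the slab measure in spherical coordinates; both are fine.
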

\begin{proof}
We define an operator $T$ mapping functions on $\R^n$ to functions on $O(n) \times \R^n$ by
\[Tf(U,x) = P_\ld P_{\leq \nu}^{Ue_1} f(x).\]
The lemma asserts that this operator is bounded from $L^p(\R^n)$ to $L^p(O(n)\times \R^n)$. By interpolation, it suffices to establish this at the endpoints $p = 2$ and $p=\infty$. 

When $p = \infty$ this is just the fact that the Littlewood-Paley projections are bounded on $L^\infty$.

When $p=2$ we use Plancherel's theorem and Fubini. 
\begin{align*}
\n{Tf}_{L^2}^2 & \sim \int_{O(n)} \int_{\R^n} \a{\phi(\xi/\ld) \chi(\xi\cd (Ue_1)/\nu) \hat f(\xi) }^2\,d\xi \,d\mu(U)\\
&\leq \sps{ \sup_\xi \int_{O(n)} \a{\phi(\xi/\ld) \chi(\xi\cd (Ue_1)/\nu)}^2\,d\mu(U)} \n{f}_2^2.
\end{align*}
Here $\phi$ is supported on an annulus, and $\chi$ is supported on an interval. We estimate the last integral using~\eqref{haarformula} and spherical coordinates:
\begin{align*}
\int_{O(n)} \a{\phi(\xi/\ld) \chi(\xi\cd (Ue_1)/\nu)}^2 \,d\mu(U) &\ll \sup_{\a{\xi} \sim \ld} \int_{S^{n-1}} \a{\chi(\a{\xi} \om\cd e_1/\nu)}^2 \,d\si(\om)\\
&\ll\sup_{\a{\xi} \sim \ld}  \int_0^\pi \a{\chi(\a{\xi} \cos \th/\nu)}^2 \, \sin(\th)^{n-2}\,d\th\\
&\ll \sup_{\a{\xi} \sim \ld} \int_{-1}^1 \chi(\a{\xi} u/\nu) \,du\\
&\ll\sup_{\a{\xi} \sim \ld}  \frac{\nu}{\a \xi} \\
&\ll \frac{\nu}{\ld}.
\end{align*}
This shows that
\[\n{Tf}_2  \ll (\nu/\ld)^{1/2} \n{f}_2,\]
which completes the proof.
\end{proof}
Define $\z(\tau,U) = \tau U(e_1-ie_2)$. Our next lemma establishes that $\n{q}_{X^{-1/2}_{\z(\tau,U)}}$ is small on average. This is implied by~\cite[Lemma 3.1]{Haberman2013}, but we give a simpler proof here, based on~\cite{Nguyen2014}:
\begin{lemma}
\label{qavg}
If $f \in \dot H^{-1}$, then
\begin{align*}
M^{-1} \int_{M}^{2M} \int_{O(n)} \n{f}_{\dot X^{-1/2}_{\z(\tau,U)}}^2 \, d\mu(U) \,d\tau &\ll \n{P_{\geq 100M} f}_{\dot H^{-1}}^2 + M^{-1} \n{P_{< 100 M}f}_{\dot H^{-1/2}}^2.
\end{align*}
\end{lemma}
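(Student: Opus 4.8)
The plan is to move both averages inside the integral by Fubini and reduce the lemma to a pointwise estimate for the averaged symbol weight. By Plancherel and Fubini the left-hand side equals $\int_{\R^n}\abs{\hat f(\xi)}^2\,\Phi(\xi)\,d\xi$, where
\[\Phi(\xi):=M^{-1}\int_M^{2M}\int_{O(n)}\abs{p_{\z(\tau,U)}(\xi)}^{-1}\,d\mu(U)\,d\tau .\]
It thus suffices to establish the pointwise bound
\begin{equation}\label{pointwisegoal}
\Phi(\xi)\ll \frac{1}{M\abs\xi}\,\mathbf 1_{\{\abs\xi\le 100M\}}+\frac{1}{\abs\xi^2}\,\mathbf 1_{\{\abs\xi> 100M\}} ,
\end{equation}
since integrating \eqref{pointwisegoal} against $\abs{\hat f(\xi)}^2$ and splitting the $\xi$-integral at $\abs\xi\sim M$ reproduces the two terms on the right of the lemma (the frequencies $M\le\abs\xi\le 100M$ go into the $\dot H^{-1/2}$ term, since there $\abs\xi^{-2}\ll M^{-1}\abs\xi^{-1}$).

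To prove \eqref{pointwisegoal} I would start from \eqref{pbehavior}, which yields $\abs{p_\z(\xi)}^{-1}\ll(\tau^2+\abs\xi^2)^{-1}+\bigl(\tau\,d(\xi,\Si_\z)\bigr)^{-1}\mathbf 1_{\{d(\xi,\Si_\z)\le\tau/8\}}$. Since $\tau\sim M$ on the domain of integration, the first summand contributes $\ll(M^2+\abs\xi^2)^{-1}$ to $\Phi$, which is dominated by the right side of \eqref{pointwisegoal}. For the second summand, put $\nu:=M^{-1}\,d\tau\,d\mu(U)$, a probability measure on $[M,2M]\times O(n)$, and set
\[G(\delta):=\nu\bigl(\{(\tau,U):d(\xi,\Si_{\z(\tau,U)})<\delta\}\bigr) .\]
By the layer-cake representation the contribution of the second summand to $\Phi$ is an integral of $G(\delta)$ against $\delta^{-2}\,d\delta$ over $\delta\le M/4$, plus a harmless boundary term, so the lemma reduces to the measure estimate
\begin{equation}\label{measgoal}
G(\delta)\ll \min\bigl(1,\,(\delta/\abs\xi)^2\bigr),\qquad\delta>0 .
\end{equation}
Inserting \eqref{measgoal} into the layer-cake integral and keeping the cases $\abs\xi\le M$ and $\abs\xi> M$ apart then gives \eqref{pointwisegoal}.

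The real content is \eqref{measgoal}, which I would prove using the disintegration of Haar measure along the orthonormal $2$-frame: one samples $U$ by taking $Ue_1$ uniform on $S^{n-1}$ and then $Ue_2$ uniform on the unit sphere of $(Ue_1)^\perp$. With $\omega=\xi/\abs\xi$, the description $d(\xi,\Si_\z)\sim\abs{\xi\cdot Ue_1}+\bigl|\,\abs{\xi-\tau Ue_2}-\tau\,\bigr|$ from \eqref{pbehavior}, together with the elementary identity $\bigl|\,\abs{\xi-\tau Ue_2}-\tau\,\bigr|\sim\abs\xi\,\bigl|\,\omega\cdot Ue_2-\tfrac{\abs\xi}{2\tau}\,\bigr|$ (valid for $\abs\xi\ll\tau$; for $\abs\xi\gg\tau$ one has $d(\xi,\Si_\z)\gg\tau$ and $\abs{p_\z(\xi)}\sim\abs\xi^2$, and then \eqref{pointwisegoal} is immediate), shows that $\{d(\xi,\Si_\z)<\delta\}$ forces $Ue_1$ into a band of width $\sim\delta/\abs\xi$ about the equator $\omega^\perp$ and $Ue_2$ into a band of width $\sim\delta/\abs\xi$ about the parallel $\{\nu:\omega\cdot\nu=\tfrac{\abs\xi}{2\tau}\}$. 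When $\abs\xi/(2\tau)$ is bounded away from $1$ these two bands are transverse, each of Haar measure $\sim\delta/\abs\xi$, and — using $Ue_1\perp Ue_2$ to see that imposing both conditions is genuinely the product of two independent constraints — one obtains $G(\delta)\ll(\delta/\abs\xi)^2$ without using the average in $\tau$ at all.

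The main obstacle, and the only place the $\tau$-average is used, is the degenerate configuration $\abs\xi\approx2\tau$. There $\abs\xi/(2\tau)\approx1$, the parallel $\{\omega\cdot\nu=\tfrac{\abs\xi}{2\tau}\}$ collapses to a small spherical cap about $\omega$, so $\{d(\xi,\Si_\z)<\delta\}$ forces $Ue_2$ near $\omega$ and hence $Ue_1$ near a point of $\omega^\perp$; the two frame constraints become nearly redundant and the fixed-$\tau$ measure of $\{d(\xi,\Si_\z)<\delta\}$ degrades to $\ll(\delta/\abs\xi)^{n/2}$, which is weaker than $(\delta/\abs\xi)^2$ when $n=3$. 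The point is that, for fixed $\xi$, the condition $\abs\xi\approx2\tau$ to the tolerance relevant for a given $\delta$ holds only for $\tau$ in a subinterval of $[M,2M]$ of relative length $\ll\delta/\abs\xi$; writing $u=\bigl|\,\tfrac{\abs\xi}{2\tau}-1\,\bigr|$, the fixed-$\tau$ measure grows no faster than $u^{(n-4)/2}(\delta/\abs\xi)^2$ (saturating at the cap value when $u\lesssim\delta/\abs\xi$), and since $(n-4)/2>-1$ this is integrable in $\tau$, so the $\tau$-integral recovers \eqref{measgoal}. Quantifying this interaction of the two band constraints together with the regularizing effect of the $\tau$-integration is the crux; the Fubini reduction, the use of \eqref{pbehavior}, the layer-cake bookkeeping, and the final integration against $\abs{\hat f}^2$ are routine.
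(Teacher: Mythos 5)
Your reduction is the same as the paper's: by Plancherel and Fubini everything hinges on the pointwise bound $M^{-1}\int_M^{2M}\int_{O(n)}\a{p_{\z(\tau,U)}(\xi)}^{-1}\,d\mu\,d\tau \ll (M\a\xi)^{-1}$ for $\a\xi\leq 100M$ (high frequencies being trivial since $\a{p_\z(\xi)}\gg\a\xi^2$ there), and your numerology checks out: your measure estimate $G(\delta)\ll\min(1,(\delta/\a\xi)^2)$, fed through the layer-cake integral, does give $(M\a\xi)^{-1}$, and in the tangential regime $\a\xi\approx 2\tau$ the fixed-$\tau$ band measure indeed degrades to $(\delta/\a\xi)^2 u^{(n-4)/2}$ (capped at $(\delta/\a\xi)^{n/2}$) with $u=\a{1-\a\xi/2\tau}$, which is $\tau$-integrable since $(n-4)/2>-1$, so the conclusion survives; note the two-sided equivalence $\a{\a{\xi-\tau Ue_2}-\tau}\sim\a\xi\,\a{\om\cd Ue_2-\a\xi/2\tau}$ that you state only for $\a\xi\lll\tau$ in fact holds for all $\a\xi\ll\tau$, which you need since the degenerate case lives at $\a\xi\sim\tau$. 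Where you differ from the paper is in how the averaged weight is computed: instead of disintegrating Haar measure into the two band constraints and running a case analysis on the parallel height (with the $n=3$ tangency handled by the $\tau$-average), the paper bounds $\a{p_\z(\xi)}\gg \tau\a{\xi\cd Ue_1}+\a{-\a\xi^2+2\tau\,\xi\cd Ue_2}$, uses~\eqref{haarformula} to pass to the sphere, and then treats $(\tau,\om)$ jointly as polar coordinates via the substitution $u=\tau\om$, so that after integrating out $n-2$ variables the whole average collapses to $M^{-2}\int_{B(0,2M)}\a v^{-1}\,dv\sim M^{-1}$ in two dimensions. That single change of variables absorbs exactly the degenerate-tangency bookkeeping you correctly identify as the crux (the local integrability of $\a v^{-1}$ in the plane encodes it), so the paper's computation needs no case distinction and no separate saturation analysis; your route is more geometric and makes visible where the $\tau$-average is genuinely needed, but it is longer, and to be complete you would still have to write out the conditional band-measure estimates you only sketch.
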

\begin{proof}
This is true if $f$ is supported at frequencies $\a{\xi} \geq 100M$, because there we have $\a{p_\z(\xi)} \geq \a \xi^2$. Thus we may assume that $f$ is supported at frequencies $\a{\xi} \ll M$, where we have $\a{p_\z(\xi)} \gg 2\tau\a{\xi\cd (U e_1)} + \a{-\a \xi^2 + 2\tau \xi \cd (Ue_2)}$. Here we use Plancherel and the identity $U^T = U^{-1}$ and estimate as in Lemma~\ref{planeavg} by
\begin{align*}
\n{\a{\na}^{-1/2}f}_2^2  \sup_{\a{\xi} \leq 100M} \frac{\a{\xi}}{M} \int_M^{2M}  \int_{O(n)} (2\tau \a {(U^{-1} \xi) \cd e_1} + \a{-\a{\xi}^2 + 2\tau (U^{-1} \xi)\cd e_2})^{-1} \,d\mu(U)\,d\tau.
\end{align*}
By~\eqref{haarformula}, the quantity inside the supremum is given by
\begin{align*}
\frac{1}{M} \int_M^{2M} \int_{S^{n-1}} (2\tau\a{\om \cd e_1} + \a{-\a\xi + 2\tau\om\cd e_2})^{-1} \,d\si(\om)\,d\tau.
\end{align*}
We view $(\tau,\om)$ as polar coordinates and change variables to $u = \tau\om$. Then in the region $\tau \in [M,2M]$ the volume element $du$ is bounded below by $M^{n-1} \, d\si(\om) \,d\tau$, so this integral is bounded by
\[\frac{1}{M^n} \int_{\a u \in [M,2M]} (2\a{u_1} + \a{-\a{\xi} + 2u_2})^{-1}\,du.\]
Writing $v = (u_1,u_2)$, and integrating over the remaining variables, we bound by
\begin{align*}
\frac{1 }{M^n} M^{n-2} \int_{B(0,2M)} (2\a {v_1} + \a{-\a\xi + 2v_2})^{-1} \,dv&\leq \frac{1}{M^2} \int_{B(0,2M)} \a{v}^{-1} \,dv\\
&\sim \frac{1}{M}.
\end{align*}
\end{proof}

We summarize our estimates so far in the following
\begin{theorem}
\label{estimatetheorem}
Let $s,p$ be as in Theorem~\ref{maintheorem}, and let $\g$ be a positive real-valued function on $\R^n$ such that $\na \log \g \in W^{s-1,p}$ and  $\g = 1$ outside of a large ball $B$. For $q = \g^{-1/2}\lp \g^{1/2}$, we have
\begin{align}
\label{mqavgdecay}
M^{-1} \int_{M/2}^{2M} \int_{O(n)} \n{q}_{X^{-1/2}_{\z(\tau,U)}}^2 \,d\mu(U)\,d\tau  &\to 0.
\end{align}
Furthermore,
\begin{align}
\sup_{\tau\in[M/2,2M]}\n{m_q}_{X^{1/2}_{\z(\tau,U)} \to X^{-1/2}_{\z(\tau,U)}} & \leq C_M + A_M(U),
\label{qavgdecay}
\end{align}
where $C_M \to 0$ as $M \to \infty$ and
\begin{equation}
\label{liminf}
\sum_{k > 2} k^{-1} \n{A_{2^k}}_{L^p(O(n))}^p <\infty.
\end{equation}
\end{theorem}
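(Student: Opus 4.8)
The plan is to assemble Theorem~\ref{estimatetheorem} from Lemmas~\ref{qavg},~\ref{mqlemma} and~\ref{planeavg}, after reducing everything to the vector field $f := \na\log\g$. Since $\g = 1$ outside $B$, the field $f$ is compactly supported, and a direct computation gives
\[q = \g^{-1/2}\lp\g^{1/2} = \tfrac12\div f + \tfrac14\a{f}^2.\]
Because $f$ is compactly supported and lies in $W^{s-1,p}$ with $s-1\ge 0$ and $p\ge 2$ (and for $3\le n\le 6$ the Sobolev exponent of $W^{s-1,p}$ is at least $n$), Sobolev embedding gives $f\in L^n\cap L^2$ and hence $\a{f}^2\in L^{n/2}$; moreover $\div f\in\dot H^{-1}$ and, since $n\ge 3$, a compactly supported $L^{n/2}$ function embeds into $L^{2n/(n+1)}\into\dot H^{-1/2}$ and into $L^{2n/(n+2)}\into\dot H^{-1}$, so $\a{f}^2\in\dot H^{-1/2}\cap\dot H^{-1}$ and $q\in\dot H^{-1}$. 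Here and below, $P_{\le N},P_{>N},P_\ld$ are the radial Littlewood--Paley projections and $\tilde P_\ld$ a slight fattening of $P_\ld$ with $P_\ld\tilde P_\ld = P_\ld$.

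To prove \eqref{mqavgdecay}, use $\n{q}_{X^{-1/2}_\z}\le\n{q}_{\dot X^{-1/2}_\z}$, split $[M/2,2M]$ into dyadic subintervals, and apply Lemma~\ref{qavg} to each; it then suffices to show $\n{P_{\ge cM}q}_{\dot H^{-1}}\to 0$ and $M^{-1}\n{P_{<cM}q}_{\dot H^{-1/2}}^2\to 0$ as $M\to\infty$ for a fixed $c$. The first is the $\dot H^{-1}$-tail of $q$. In the second, the $\tfrac14\a{f}^2$ part is bounded by $M^{-1}\n{\a{f}^2}_{\dot H^{-1/2}}^2\to 0$; for the $\tfrac12\div f$ part we use $\a{\widehat{\div f}(\xi)}^2\le\a{\xi}^2\,\a{\hat f(\xi)}^2$ and split the frequency integral at a radius $R$:
\[\n{P_{<cM}\div f}_{\dot H^{-1/2}}^2\ll\int_{\a{\xi}<cM}\a{\xi}\,\a{\hat f(\xi)}^2\,d\xi\ll R\,\n{f}_2^2 + cM\,\n{P_{\ge R}f}_2^2,\]
so that $\limsup_{M\to\infty}M^{-1}\n{P_{<cM}\div f}_{\dot H^{-1/2}}^2\ll\n{P_{\ge R}f}_2^2$ for every $R$, which is $0$ on letting $R\to\infty$.

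For \eqref{qavgdecay}, fix $M$, set $N = N_M := M^{1/4}$, and split $f = P_{\le N}f + P_{>N}f$, $\a{f}^2 = P_{\le N}\a{f}^2 + P_{>N}\a{f}^2$; decompose $m_q$ into the four resulting pieces. The two low-frequency pieces are estimated by \eqref{linfinity} and Bernstein's inequality: $\n{\div P_{\le N}f}_\infty\ll N^{1+n/p}\n{f}_p$ and $\n{P_{\le N}\a{f}^2}_\infty\ll N^2\n{f}_n^2$, so, since $\tau\gg M$ and $1+n/p\le 2$ (which dictates the choice of $N$),
\[\n{m_{\div P_{\le N}f}}_{X^{1/2}_\z\to X^{-1/2}_\z}+\n{m_{P_{\le N}\a{f}^2}}_{X^{1/2}_\z\to X^{-1/2}_\z}\ll M^{-1}N^2\bigl(\n{f}_p+\n{f}_n^2\bigr)\ll M^{-1/2}\bigl(\n{f}_p+\n{f}_n^2\bigr).\]
The high-frequency piece of $\a{f}^2$ is bounded by \eqref{lowerorder} by $\n{P_{>N}\a{f}^2}_{n/2}\to 0$. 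For the last piece $\tfrac12 m_{\div P_{>N}f}$ we apply Lemma~\ref{mqlemma} with $\z=\z(\tau,U)=\tau U(e_1-ie_2)$, noting that the distinguished direction ``$e_1$'' there is $Ue_1$ here and that $P_\ld P^{Ue_1}_{\le 8\nu}P_{>N}f$ arises from $P_\ld P^{Ue_1}_{\le 8\nu}f$ by a bounded Fourier multiplier, to get
\[\n{m_{\div P_{>N}f}}_{X^{1/2}_{\z(\tau,U)}\to X^{-1/2}_{\z(\tau,U)}}\ll\n{P_{>N}f}_n + A_M(U),\qquad A_M(U):=\sup_{\substack{\ld\le 200M\\\nu\le\ld}}(\ld/M)^{\b}(\ld/\nu)^{1/p}\ld^{s-1}\n{P_\ld P^{Ue_1}_{\le 8\nu}f}_p,\]
with $\n{P_{>N}f}_n\to 0$. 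Collecting into $C_M$ all the terms tending to $0$ as $M\to\infty$ yields \eqref{qavgdecay}.

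Finally, for \eqref{liminf}, bound the supremum defining $A_{2^k}(U)$ by the $\ell^p$-sum over the dyadic pairs $(\ld,\nu)$ with $\ld\le 2^{k+8}$ and $\nu\le\ld$; integrating over $O(n)$ and applying Lemma~\ref{planeavg} to $\tilde P_\ld f$ gives $\int_{O(n)}(\ld/\nu)\n{P_\ld P^{Ue_1}_{\le 8\nu}f}_p^p\,d\mu(U)\ll\n{\tilde P_\ld f}_p^p$, and since there are $\ll\log(2+\ld)$ admissible $\nu$,
\[\n{A_{2^k}}_{L^p(O(n))}^p\ll\sum_{\ld\le 2^{k+8}}(\ld/2^k)^{\b p}\,\ld^{(s-1)p}\,\log(2+\ld)\,\n{\tilde P_\ld f}_p^p.\]
Multiplying by $k^{-1}$, summing in $k$ and exchanging the order of summation, the inner sum over $k$ satisfies $\sum_{\substack{k>2\\2^{k+8}\ge\ld}}k^{-1}(\ld/2^k)^{\b p}\ll(\log(2+\ld))^{-1}$, because $\b>0$ makes it essentially geometric with largest term (at $k\sim\log_2\ld$) of size $\ll(\log(2+\ld))^{-1}$; hence
\[\sum_{k>2}k^{-1}\n{A_{2^k}}_{L^p(O(n))}^p\ll\sum_\ld\ld^{(s-1)p}\n{\tilde P_\ld f}_p^p\ll\n{f}_{B^{s-1}_{p,p}}^p\ll\n{f}_{W^{s-1,p}}^p<\infty,\]
where we used the embedding $W^{s-1,p}=F^{s-1}_{p,2}\into F^{s-1}_{p,p}=B^{s-1}_{p,p}$, valid for $p\ge 2$. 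I expect the main difficulty to be exactly this last step: converting the supremum over $(\ld,\nu)$ in $A_M(U)$ into a sum costs about $k^2$ terms, and the only compensating gains are the $(\ld/\tau)^{\b}$ factor ($\b>0$) from Lemma~\ref{mqlemma} and the $(\ld/\nu)^{1/p}$ orthogonality gain absorbed by Lemma~\ref{planeavg}; making these beat the logarithmic losses needs both the rearrangement $\sum_{k\gg\log\ld}k^{-1}(\ld/2^k)^{\b p}\ll(\log\ld)^{-1}$ and the Triebel--Lizorkin embedding into $B^{s-1}_{p,p}$. A secondary subtlety is that for $n=3,4$ there is no Sobolev slack ($f\in L^n$ and $\a{f}^2\in L^{n/2}$ with sharp exponents), so the choice of $N_M$ and the Bernstein bounds for the low-frequency pieces must use the sharp relation $1+n/p\le 2$.
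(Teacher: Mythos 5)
Your proposal is correct and follows essentially the same route as the paper's proof: write $q=\tfrac12\div f+\tfrac14\a{f}^2$ with $f=\na\log\g$, handle the low-frequency part via \eqref{linfinity} with a cutoff at scale $M^{1/4}$, handle the main part via Lemma~\ref{mqlemma} combined with Lemma~\ref{planeavg}, the $k^{-1}$-weighted dyadic summation (using $\b>0$) and the embedding $W^{s-1,p}\subset B^{s-1}_{p,p}$, and prove \eqref{mqavgdecay} via Lemma~\ref{qavg}. The only (harmless) deviations are your sharp Littlewood--Paley cutoff at $N=M^{1/4}$ in place of the paper's mollification at scale $\epsilon=M^{-1/4}$, your direct application of Lemma~\ref{qavg} to all of $q$ rather than through the good/bad decomposition, and a slightly different bookkeeping of the logarithmic factors (extracting $(\log\ld)^{-1}$ from the $k$-sum instead of cancelling $\log M$ against the weight), none of which changes the substance of the argument.
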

\begin{proof}
First, we write
\[\g^{-1/2} \lp \g^{1/2} = \f 1 2 \lp \log \g + \f 1 4 \a{\na \log \g}^2 = \sum_i \na_i f_i + h,\]
where $f_i \in W^{s-1,p}$ and $h \in L^{p/2}$.

We decompose each term into a good part and a bad part. Let $\phi_\epsilon = \epsilon^{-n} \phi(x/\epsilon)$, where $\phi$ is a $C_0^\infty$ function supported on the unit ball and $\int \phi = 1$. Define $f_\epsilon = f * \phi_\epsilon$. 

By~\eqref{linfinity}, we have
\begin{align*}
\n{m_{\na f_\e}}_{X^{1/2}_\z \to X^{-1/2}_\z} + \n{m_{h_\e}}_{X^{1/2}_\z \to X^{-1/2}_\z} & \ll \tau^{-1}( \n{\na f_\e}_\infty + \n{h_\e}_\infty)\\
&\ll \tau^{-1} \e^{-2}( \n{f}_n + \n{h}_{n/2}).
\end{align*}
We also have
\begin{align*}
\n{\na f_\e}_{X^{-1/2}_\z} & \ll \tau^{-1/2} \n{\na f_\e}_2 \\
&\ll \tau^{-1/2} \e^{-1} \n{f}_2 \\
&\ll \tau^{-1/2} \e^{-1} \n{f}_n,
\end{align*}
since $n > 2$ and $f$ is compactly supported. For $n\geq 4$ we have
\begin{equation}
\begin{aligned}
\n{h}_{X^{-1/2}_\z} &\ll \tau^{-1/2} \n{h}_2\\
&\ll \tau^{-1/2} \n{h}_{n/2},
\label{highhestimate}
\end{aligned}
\end{equation}
and for $n = 3$ we have
\begin{align*}
\n{h_\e}_{X^{-1/2}_\z} & \ll \tau^{-1/2} \n{h_\e}_2 \\
&\ll \tau^{-1/2} \e^{-1/2} \n{h}_{3/2}.
\end{align*}
Taking $\e = M^{-1/4}$, we find that if we replace $q$ with $q_\e$ then the left hand sides of~\eqref{mqavgdecay} and~\eqref{qavgdecay} vanish as $\tau \to \infty$.

It remains to treat the bad part $q-q_\e$. Let $g = f - f_\e$, and define
\[A(\tau,U) = \n{m_{\na g}}_{X^{1/2}_{\z(\tau,U)}\to X^{-1/2}_{\z(\tau,U)}}.\]
Using Lemma~\ref{mqlemma}, we have
\[\sup_{\tau\in [M/2,2M]} A(\tau,U) \ll \n{g}_{L^p} + \sps{\sum_{1\leq \nu\leq \ld \leq 4M} [(\ld/M)^{\b} \ld^{s-1} A_{\ld,\nu}(U)]^p}^{1/p}, \]
where $A_{\ld,\nu}(U) = (\ld/\nu)^{1/p} \n{P_\ld P_{\leq 8\nu}^{Ue_1} g}_{L^p}$. As $M \to \infty$, we have $\e = M^{-1/4} \to 0$, so $\n{g}_{L^p} \to 0$. We take $A_M(U)$ to be the second term on the right hand side of this inequality, which is clearly a measurable function on $O(n)$. Now, $P_\ld g = P_\ld P_{\sim \ld} g$, where $P_{\sim \ld} g = \sum_{\ld/16 \leq \mu\leq 16\ld} P_\mu g$. Applying Lemma~\ref{planeavg}, we have
\begin{align*}
\n{A_M(U)}_{L^p(O(n))}^p &\ll \sum_{1\leq \nu \leq \ld \leq M/4} [(\ld/M)^{\b} \ld^{s-1} \n{P_{\sim \ld} g}_{L^p}]^p\\
&\ll \log M \sum_{1\leq \ld \leq M/4} [(\ld/M)^\b \ld^{s-1} \n{P_{\sim \ld} f}_{L^p}]^p.
\end{align*}
We control this quantity by taking a weighted sum over dyadic integers $M$, as in~\cite{Nguyen2014}. Namely, we have
\begin{align*}
\sum_{M\geq 2} (\log M)^{-1} \n{A_{M}(U)}^p_{L^p(O(n))} & \ll \sum_\ld \sum_{M \geq 4\ld} (\ld/M)^{\b p} [\ld^{s-1} \n{P_{\sim \ld} f}_{L^p}]^p\\
&\ll \sum_\ld [\ld^{s-1} \n{P_{\sim \ld} f}_{L^p}]^p.
\end{align*}
The last term is controlled by $\n{f}_{W^{s-1,p}}$ as a consequence of the Littlewood-Paley square function estimate. Thus we obtain~\eqref{liminf}.

By Lemma~\ref{qavg}, we have
\[M^{-1}\int_{M/2}^{2M} \int_{O(n)} \n{\na g}^2\,d\mu(U)\,d\tau \ll \n{g}_{L^2}^2\ll \n{g}_{L^p}^2\to 0.\]

Next we treat $h-h_\e$. When $n \geq 4$ we have $\n{h-h_\e}_{X^{-1/2}_\z} \to 0$ by~\eqref{highhestimate}. When $n=3$, we have
\begin{align*}
\n{h-h_\e}_{X^{-1/2}_\z} & \ll \n{h-h_\e}_{H^{-1/2}}\\
&\ll \n{h-h_\e}_{3/2}\\
&\to 0
\end{align*}
by Sobolev embedding. Finally, by~\eqref{lowerorder} we have
\[\n{m_{h-h_\e}}_{X^{1/2}_\z\to X^{-1/2}_\z} \ll \n{h-h_\e}_{n/2} \to 0.\]
\end{proof}

\section{Localization}
Because our problem is localized to a compact set, the uncertainty principle implies that the $X^{1/2}_\z$ norm is equivalent to the $\dot X^{1/2}_\z$ norm. To make this precise, we state the following
\begin{lemma}[\cite{Haberman2013}]
Let $\phi$ be a fixed Schwartz function. Then
\begin{align}
\label{dualloc}
\n{\phi u}_{\dot X^{-1/2}_\z} &\ll_\phi \n{u}_{X_\z^{-1/2}}\\
\label{loc}
\n{\phi u}_{X^{1/2}_\z} &\ll_\phi \n{u}_{\dot X^{1/2}_\z},
\end{align}
where the constants depend on the seminorms $\n{x^\al \na^\b \phi}_\infty$.
\end{lemma}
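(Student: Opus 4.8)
The plan is first to observe that the two inequalities are dual to one another, so that only one of them needs a direct argument. Under the sesquilinear pairing $\jap{u,v}=\int\hat u\,\overline{\hat v}$ the spaces $\dot X^{\pm1/2}_\z$ are each other's duals, and likewise $X^{\pm1/2}_\z$ (these are all weighted $L^2$ spaces), while the adjoint of $u\mapsto\phi u$ is $u\mapsto\bar\phi u$, with $\bar\phi$ again Schwartz with the same seminorms; hence it suffices to prove~\eqref{loc} for every Schwartz $\phi$, and~\eqref{dualloc} follows. For~\eqref{loc} I would use two elementary properties of the weight $w_\z(\xi):=\abs{p_\z(\xi)}+\tau$. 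First, $w_\z$ is $\jap{\cdot}^2$-moderate, i.e. $w_\z(\xi)\ll w_\z(\eta)\jap{\xi-\eta}^2$ for all $\xi,\eta$ with an absolute constant; this follows from~\eqref{pbehavior} and the fact that $d(\cdot,\Si_\z)$ and $\abs{\cdot}$ are Lipschitz, by checking the four cases according to whether $d(\xi,\Si_\z)$ and $d(\eta,\Si_\z)$ are $\le\tau/8$ or $>\tau/8$. Second, $\abs{p_\z}^{-1}$ is uniformly locally integrable, with a gain coming from the codimension of $\Si_\z$: since $\abs{p_\z(\eta)}\sim\tau\,d(\eta,\Si_\z)$ near $\Si_\z$ and the $r$-neighborhood of the codimension-$2$ set $\Si_\z$ has volume $\ll r^2\tau^{n-2}$, one gets $\sup_\xi\int_{B(\xi,r)}\abs{p_\z(\eta)}^{-1}\,d\eta\ll r^{n-1}\tau^{-1}$ for every $r\ge1$ (the range $r\gtrsim\tau$ being even easier, as $\abs{p_\z}\gtrsim\tau^2$ away from a bounded set), and summing the shells $\abs{\xi-\eta}\sim2^j$ against the rapidly decaying $\hat\phi$ gives $\int\abs{\hat\phi(\xi-\eta)}\abs{p_\z(\eta)}^{-1}\,d\eta\ll C_\phi\tau^{-1}$, where $C_\phi$ depends on finitely many of the seminorms $\n{x^\al\na^\b\phi}_\infty$.

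Next I would split $u=Q_1u+(I-Q_1)u=:u_{\mathrm{near}}+u_{\mathrm{far}}$, so $\phi u=\phi u_{\mathrm{near}}+\phi u_{\mathrm{far}}$, and treat the two pieces separately. On $\supp\hat u_{\mathrm{far}}$ one has $d(\eta,\Si_\z)>1$, hence $\abs{p_\z(\eta)}\gg\tau$ by~\eqref{pbehavior}, so there $w_\z\sim\abs{p_\z}$ and $\n{u_{\mathrm{far}}}_{X^{1/2}_\z}\sim\n{u_{\mathrm{far}}}_{\dot X^{1/2}_\z}$. Writing $\widehat{\phi u_{\mathrm{far}}}=\hat\phi*\hat u_{\mathrm{far}}$ and using Cauchy--Schwarz with weight $\abs{\hat\phi}$, one bounds $\abs{\widehat{\phi u_{\mathrm{far}}}(\xi)}^2\ll\n{\hat\phi}_1\int\abs{\hat\phi(\xi-\eta)}\abs{\hat u_{\mathrm{far}}(\eta)}^2\,d\eta$; multiplying by $w_\z(\xi)$, integrating, applying Fubini, and then $\jap{\cdot}^2$-moderateness (so that $\int w_\z(\xi)\abs{\hat\phi(\xi-\eta)}\,d\xi\ll w_\z(\eta)\int\jap{y}^2\abs{\hat\phi(y)}\,dy\ll C_\phi\,w_\z(\eta)$) gives $\n{\phi u_{\mathrm{far}}}_{X^{1/2}_\z}\ll C_\phi\n{u_{\mathrm{far}}}_{X^{1/2}_\z}\ll C_\phi\n{u}_{\dot X^{1/2}_\z}$. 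For the near piece I would write $\hat u_{\mathrm{near}}=\abs{p_\z}^{-1/2}g$ with $g$ supported on $E_{\le1}$ and $\n{g}_2=\n{u_{\mathrm{near}}}_{\dot X^{1/2}_\z}$; Cauchy--Schwarz in the convolution, pulling out the factor $\int\abs{\hat\phi(\xi-\eta)}\abs{p_\z(\eta)}^{-1}\,d\eta\ll C_\phi\tau^{-1}$, yields $\abs{\widehat{\phi u_{\mathrm{near}}}(\xi)}^2\ll C_\phi\tau^{-1}\int\abs{\hat\phi(\xi-\eta)}\abs{g(\eta)}^2\,d\eta$; now multiply by $w_\z(\xi)$ and integrate, using that $w_\z\sim\tau$ on $E_{\le1}$ so that $\jap{\cdot}^2$-moderateness gives $\int w_\z(\xi)\abs{\hat\phi(\xi-\eta)}\,d\xi\ll\tau C_\phi$ for $\eta\in\supp g$; the two powers of $\tau$ cancel and one gets $\n{\phi u_{\mathrm{near}}}_{X^{1/2}_\z}\ll C_\phi\n{g}_2=C_\phi\n{u_{\mathrm{near}}}_{\dot X^{1/2}_\z}$. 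Adding the two contributions proves~\eqref{loc}, and hence~\eqref{dualloc} by duality.

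I expect the near piece to be the only genuine obstacle. In the unit neighborhood of $\Si_\z$ the norm $\n{u}_{\dot X^{1/2}_\z}$ can be arbitrarily small compared with $\n{u}_{X^{1/2}_\z}$, since the homogeneous weight $\abs{p_\z}^{1/2}$ vanishes on $\Si_\z$, so no naive pointwise comparison of the weights is available there. What makes the estimate go through is that multiplication by a fixed Schwartz function cannot concentrate Fourier mass on the codimension-$2$ set $\Si_\z$; this heuristic is turned into the quantitative bound $\int_{B(\xi,1)}\abs{p_\z}^{-1}\ll\tau^{-1}$, whose gain of $\tau^{-1}$ is exactly what is needed to offset the fact that $w_\z\sim\tau$ throughout that neighborhood.
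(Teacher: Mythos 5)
Your proof is correct, and since the paper does not reprove this lemma (it invokes it as~\cite{Haberman2013}, where it appears as a localization lemma), the relevant comparison is with that reference; the mechanism you identify is indeed the one used there. The two substantive ingredients you isolate are exactly the ones required: the observation that $w_\z(\xi)=\abs{p_\z(\xi)}+\tau$ is $\jap{\cdot}^2$-moderate (your four-case verification against~\eqref{pbehavior} checks out, with the mixed term $\abs{\xi-\eta}\abs{\eta}$ absorbed by $\abs{\eta}^2\lesssim\abs{p_\z(\eta)}$ when $\abs{\eta}\gg\tau$ and by $\tau$ otherwise), and the codimension-$2$ bound $\sup_\xi\int_{B(\xi,r)}\abs{p_\z}^{-1}\ll r^{n-1}\tau^{-1}$, which is the quantitative expression of the fact that a Schwartz cutoff cannot trap Fourier mass on the codimension-two characteristic variety. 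The near/far (i.e.\ low/high modulation) split at modulation $1$ and the reduction of~\eqref{dualloc} to~\eqref{loc} by duality of the weighted $L^2$ pairs $\dot X^{\pm1/2}_\z$ and $X^{\pm1/2}_\z$ are standard and correctly executed; the cancellation $\tau^{-1}\cdot\tau$ in the near piece is precisely where the uncertainty principle heuristic is made rigorous. One small point of care you handle implicitly and correctly: the factorization $\hat u_{\mathrm{near}}=\abs{p_\z}^{-1/2}g$ is only a.e., but $\Si_\z$ has Lebesgue measure zero, so this is harmless.
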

In particular, we have
\begin{lemma}
Suppose that $q$ is compactly supported. Then
\begin{equation}
\n{m_q}_{\dot X_\z^{1/2}\to \dot X_\z^{-1/2}} \ll \n{m_q}_{X_\z^{1/2}\to X_\z^{-1/2}}.
\label{compactbilinear}
\end{equation}
\end{lemma}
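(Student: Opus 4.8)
The plan is to exploit the compact support of $q$ by inserting Schwartz cutoffs, after which the estimates~\eqref{dualloc} and~\eqref{loc} of the preceding localization lemma let us trade the homogeneous norms for the inhomogeneous ones on which $m_q$ is assumed bounded. Concretely, fix $\phi\in C_0^\infty(\R^n)$ with $\phi\equiv 1$ on a neighborhood of $\supp q$. Note that $\dot X^{1/2}_\z \into \mc S'(\R^n)$: indeed $\a{p_\z(\xi)}^{1/2}\hat u\in L^2$, and since $\a{p_\z(\xi)}^{-1/2}$ is locally integrable (the singularity is of order $d(\xi,\Si_\z)^{-1/2}$ transverse to the codimension-two set $\Si_\z$), Cauchy--Schwarz gives $\hat u\in L^1_\loc$, so $u$ is a tempered distribution and $\phi u$ is well defined.

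The key algebraic observation is that for $u\in\dot X^{1/2}_\z$ we have $m_q u = qu = q(\phi u) = m_q(\phi u)$, because $\phi u$ agrees with $u$ on $\supp q$; moreover $m_q(\phi u)=q\phi u$ is supported in $\supp q$, where $\phi\equiv 1$, so in fact $m_q u = \phi\, m_q(\phi u)$. Applying~\eqref{dualloc} and then~\eqref{loc} (both with this fixed $\phi$) we obtain
\[\n{m_q u}_{\dot X^{-1/2}_\z} = \n{\phi\, m_q(\phi u)}_{\dot X^{-1/2}_\z} \ll \n{m_q(\phi u)}_{X^{-1/2}_\z} \le \n{m_q}_{X^{1/2}_\z\to X^{-1/2}_\z}\,\n{\phi u}_{X^{1/2}_\z} \ll \n{m_q}_{X^{1/2}_\z\to X^{-1/2}_\z}\,\n{u}_{\dot X^{1/2}_\z},\]
and taking the supremum over $u$ with $\n{u}_{\dot X^{1/2}_\z}\le 1$ gives~\eqref{compactbilinear}.

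The argument is essentially soft, so there is no substantial obstacle; the only point that merits a little care is the manipulation $m_q u=\phi\, m_q(\phi u)$ at the level of distributions. If one prefers to avoid it, one can instead work with the bilinear form $m_q(u,v)=\j{m_q u, v}$: since $q=q\phi^2$, one has $m_q(u,v)=m_q(\phi u,\phi v)$ for $u,v\in\mc S$, whence $\a{m_q(u,v)}\ll\n{m_q}_{X^{1/2}_\z\to X^{-1/2}_\z}\n{u}_{\dot X^{1/2}_\z}\n{v}_{\dot X^{1/2}_\z}$ by~\eqref{loc}, and one concludes by duality, using that $\dot X^{-1/2}_\z$ is the dual of $\dot X^{1/2}_\z$ under the $L^2$ pairing (valid because $\Si_\z$ has measure zero, so $\n{\cdot}_{\dot X^{1/2}_\z}$ is genuinely a norm). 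Either way the content lies entirely in the already-established localization estimates.
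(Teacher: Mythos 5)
Your argument is correct and is essentially the paper's own proof: the bilinear-form version you sketch at the end (insert $\phi$ with $q=q\phi^2$, bound by $\n{m_q}_{X^{1/2}_\z\to X^{-1/2}_\z}\n{\phi u}_{X^{1/2}_\z}\n{\phi v}_{X^{1/2}_\z}$, then apply~\eqref{loc}) is exactly what the paper does. Your primary operator-level variant, using~\eqref{dualloc} once and~\eqref{loc} once via $m_q u=\phi\,m_q(\phi u)$, is just the dual formulation of the same localization argument, so there is no substantive difference.
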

\begin{proof}
Let $\phi$ be a Schwartz function that it equal to one on the support of $q$. Then
\begin{align*}
\a{\j{m_q u, v}} & = \a{\j{m_q \phi u, \phi v}} \\
&\ll \n{m_q}_{X^{1/2}_\z \to X^{-1/2}_\z} \n{\phi u}_{X^{1/2}_\z}\n{\phi v}_{X^{1/2}_\z}\\
&\ll \n{m_q}_{X^{1/2}_\z \to X^{-1/2}_\z} \n{u}_{\dot X^{1/2}_\z} \n{v}_{\dot X^{1/2}_\z}
\end{align*}
\end{proof}

We record the following useful fact:
\begin{lemma}
\label{fudgezeta}
Suppose $\z, \ti \z\in \C^n$ satisfy $\z \cd \z = \ti \z \cd \ti \z = 0$. Then
\[\n{u}_{X^{b}_\z} \ll (1+\a{\z-\ti \z})^{\a b} \n{u}_{X^b_{\ti \z}}.\]
\end{lemma}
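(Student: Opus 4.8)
The plan is to reduce the estimate to a pointwise comparison of the two weights that define the $X^b$ norms. Write $w_\z(\xi) := \a{p_\z(\xi)} + \tau$ and $w_{\ti\z}(\xi) := \a{p_{\ti\z}(\xi)} + \ti\tau$, where $\tau = \a\z/\sqrt2$ and $\ti\tau = \a{\ti\z}/\sqrt2$; recall that $\z\cd\z = 0$ forces $\z = \tau(e_1-ie_2)$ for some orthonormal pair $e_1,e_2$, and likewise $\ti\z = \ti\tau(\ti e_1 - i\ti e_2)$. Since $\n{u}_{X^b_\z} = \n{w_\z^b\,\hat u}_{L^2}$, the lemma follows once I establish the pointwise bound
\[ w_\z(\xi) \ll (1 + \a{\z - \ti\z})\, w_{\ti\z}(\xi) \qquad \text{for all } \xi \in \R^n. \]
Applying this with $\z$ and $\ti\z$ interchanged (this is the same statement, as $\a{\ti\z - \z} = \a{\z - \ti\z}$) gives the reverse inequality as well; then for $b > 0$ one raises the first inequality to the power $b$ and integrates $\a{\hat u}^2$, while for $b < 0$ one raises the reversed inequality to the power $\a b = -b$ and does the same. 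Throughout I use that $\tau, \ti\tau \gg 1$, which is the only regime relevant to this paper (the claim fails without a lower bound on $\ti\tau$: take $\ti\z = 0$).

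To prove the pointwise bound I compare the three quantities $\tau$, $\a{p_\z(\xi)}$, and $\a\xi$ against their $\ti\z$ counterparts. First, $\a{\tau - \ti\tau} = \a{\,\a\z - \a{\ti\z}\,}/\sqrt2 \le \a{\z - \ti\z}/\sqrt2$, so $\tau \le \ti\tau + \a{\z-\ti\z} \ll (1 + \a{\z - \ti\z})\ti\tau$. Second, the symbols differ only to first order in $\xi$: $\a{p_\z(\xi) - p_{\ti\z}(\xi)} = 2\a{(\z - \ti\z)\cd\xi} \le 2\a{\z-\ti\z}\,\a\xi$. Third, taking real parts in $p_{\ti\z}(\xi) = -\a\xi^2 + 2i\ti\z\cd\xi$ gives $\a\xi^2 = -\Re p_{\ti\z}(\xi) - 2\,\Im(\ti\z)\cd\xi \le \a{p_{\ti\z}(\xi)} + 2\ti\tau\a\xi$, and absorbing $2\ti\tau\a\xi \le \tfrac12\a\xi^2 + 2\ti\tau^2$ yields $\a\xi^2 \ll \a{p_{\ti\z}(\xi)} + \ti\tau^2$, i.e. $\a\xi \ll \a{p_{\ti\z}(\xi)}^{1/2} + \ti\tau$ (this is essentially~\eqref{pbehavior}). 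Combining the last two facts,
\[ \a{p_\z(\xi)} \le \a{p_{\ti\z}(\xi)} + 2\a{\z-\ti\z}\bigl(\a{p_{\ti\z}(\xi)}^{1/2} + \ti\tau\bigr), \]
and since $\a{p_{\ti\z}(\xi)}^{1/2} \le 1 + \a{p_{\ti\z}(\xi)}$ and $\ti\tau \gg 1$, the second term on the right is $\ll (1 + \a{\z - \ti\z})(\a{p_{\ti\z}(\xi)} + \ti\tau)$. Adding the bound on $\tau$ from the first step gives the pointwise inequality, and hence the lemma.

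I do not expect a serious obstacle here; the estimate is elementary once organized in this way. The only points that require a little attention are the a priori bound $\a\xi \ll \a{p_{\ti\z}(\xi)}^{1/2} + \ti\tau$ (which quantifies the fact that $p_{\ti\z}$ controls both the frequency and the parameter, up to the shift by $\ti\tau$) and the bookkeeping that lets a single pointwise multiplier estimate cover both signs of $b$ and both orderings of $\z,\ti\z$.
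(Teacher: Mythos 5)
Your argument is essentially the paper's: the key step in both is the pointwise symbol comparison $\a{p_\z}\leq\a{p_{\ti\z}}+2\a{\z-\ti\z}\a{\xi}$ together with the bound $\a{\xi}\ll\a{p_{\ti\z}}^{1/2}+\ti\tau$ (which the paper invokes via~\eqref{pbehavior}), yielding the weight comparison that is then raised to the power $\a b$ by symmetry in $\z,\ti\z$. You simply spell out the details the paper leaves implicit (the comparison of $\tau$ with $\ti\tau$, the treatment of both signs of $b$, and the tacit lower bound $\tau,\ti\tau\gg 1$), so this is correct and the same route.
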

\begin{proof}
We have
\begin{align*}
\a{p_\z}& \leq \a{p_{\ti \z}} + 2 \a{(\z-\ti \z) \cd \xi} \\
&\leq \a{p_{\ti \z}} + 2 \a{\z-\ti \z} \a{\xi}\\
&\ll ( 1 + \a{\z-\ti \z}) (\a{p_{\ti \z}} + \tau)
\end{align*}
by~\eqref{pbehavior}. 
\end{proof}

\section{Proof of the main theorem}
We summarize some known results which allow us to extend the $\g_i$ to all of $\R^n$. First we transfer the problem to the interior, as in~\cite{Sylvester1987}.
\begin{lemma}
\label{weakequality}
Suppose $n \geq 3$. Let $\g_1,\g_2 \in W^{1,n}(\R^n)$ be functions such that $0<c \leq \g_i \leq c^{-1}$ for some $c$. If $\g_1 = \g_2$ outside $\Om$ and $\Ld_{\g_1}=\Ld_{\g_2}$, then for $q_j = \lp \sqrt {\g_j}/\sqrt{\g_j}$, we have
\[\j{q_1,v_1 v_2} = \j{q_2,v_1 v_2}\]
when each $v_j$ is a solution in $H^1_\loc(\R^n)$ to $\lp v_j - q_j v_j = 0$.
\end{lemma}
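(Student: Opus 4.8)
The plan is to follow the classical reduction of Sylvester--Uhlmann, adapted to the low-regularity Sobolev setting. First I would recall the standard integral identity: if $\Ld_{\g_1} = \Ld_{\g_2}$ and $\g_1 = \g_2$ outside $\Om$, then for any $u_j \in H^1(\Om)$ solving $L_{\g_j} u_j = 0$ in $\Om$, one has $\int_\Om (\g_1 - \g_2) \na u_1 \cd \na u_2 \, dx = 0$. This follows by integrating by parts and using the symmetry of the Dirichlet-to-Neumann map together with the fact that $\Ld_{\g_1} = \Ld_{\g_2}$; the hypothesis $\g_1 = \g_2$ on $\pd \Om$ (which is forced by the boundary determination results of Brown cited in the introduction, applicable since $\g_i \in W^{1,n} \subset W^{1,1}$) ensures there is no boundary contribution.

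Next I would perform the change of variables $v_j = \sqrt{\g_j}\, u_j$, which converts the conductivity equation $\div(\g_j \na u_j) = 0$ into the Schr\"odinger equation $(\lp - q_j) v_j = 0$ with $q_j = \lp \sqrt{\g_j} / \sqrt{\g_j}$. Since $\g_j \in W^{1,n}(\R^n)$ with $\g_j$ bounded above and below, $\sqrt{\g_j}$ is again in $W^{1,n}$ and bounded away from zero, so this substitution is a bounded invertible map on $H^1_\loc$; in particular a function $v_j \in H^1_\loc(\R^n)$ solves $\lp v_j - q_j v_j = 0$ (in the weak/distributional sense, noting $q_j \in H^{-1}$ so the pairing $\j{q_j, v_1 v_2}$ makes sense because $v_1 v_2$ lands in a suitable space) if and only if $u_j = v_j / \sqrt{\g_j}$ solves the conductivity equation. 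I would then rewrite the vanishing integral in terms of $v_j$ and $q_j$: expanding $\na u_j = \na(v_j/\sqrt{\g_j})$ and using the product rule, the quadratic expression $\g_1 \na u_1 \cd \na u_2$ can be manipulated (integrating by parts, using that $\g_1 = \g_2$ outside $\Om$ so cutoff terms are supported in $\bar\Om$) into the form $\j{q_1 - q_2, v_1 v_2}$ plus terms that cancel; this is the standard computation showing the two bilinear forms agree.

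The point where care is needed — and what I expect to be the main obstacle — is the rigorous justification of the integration by parts and the passage to global $H^1_\loc(\R^n)$ solutions at this regularity. In the original $C^2$ setting everything is classical, but here $\g_j$ has only one derivative in $L^n$, so $q_j$ is merely a distribution in $H^{-1}$ and products like $v_1 v_2$ must be shown to lie in $H^1$ (or in the right dual space) for the pairing to be well-defined; one uses Sobolev embedding $H^1 \hookrightarrow L^{2n/(n-2)}$ and the fact that $\na \log \g_j \in L^n$ to control the relevant products. Additionally, one must check that interior solutions $u_f$ on $\Om$ can be glued to the harmonic extension outside (using $\g_1 = \g_2$ there) to produce genuine $H^1_\loc(\R^n)$ solutions of the global equation, and conversely that the CGO solutions constructed globally restrict to admissible test solutions on $\Om$. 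These are by now well-understood facts — the argument is essentially that of \cite{Sylvester1987} combined with the boundary identification of \cite{Brown2013} and the density of traces — so I would cite those works and carry out the substitution carefully, keeping track of which function spaces each product lives in, rather than reproving the boundary determination from scratch.
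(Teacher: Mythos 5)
The paper's own proof of Lemma~\ref{weakequality} is nothing more than a citation to \cite{Brown2003}, where exactly this low-regularity Sylvester--Uhlmann reduction (the integral identity from $\Ld_{\g_1}=\Ld_{\g_2}$, the substitution $v_j=\sqrt{\g_j}\,u_j$, and the bookkeeping of products at $W^{1,n}$ regularity via $H^1\hookrightarrow L^{2n/(n-2)}$ and $\na\log\g_j\in L^n$) is carried out; your sketch follows that same route and defers to the same sources, so it is essentially the paper's approach. The only superfluous ingredient is the appeal to boundary determination: the hypothesis $\g_1=\g_2$ outside $\Om$ already gives agreement of the traces, so \cite{Brown2013} plays no role inside this lemma (it is used earlier, to justify extending the $\g_i$ so that they agree outside $\Om$).
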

\begin{proof}
See~\cite{Brown2003}. 
\end{proof}

The following argument is apparently due to Alessandrini. It amounts to the fact that $q_1 = q_2$ implies that the function $\log \g_1 - \log \g_2$ solves the Dirichlet problem $\div \sqrt{g_1 g_2}\na u = 0$ with $u = 0$ at infinity. See~\cite{Sylvester1987,Brown1996,Brown2003}.
\begin{lemma}
\label{gaidentity}
Let $\g_i, q_i$ be as in Lemma~\ref{weakequality}, and suppose that $q_1 = q_2$ in the sense of distributions. Then $\g_1 = \g_2$.
\end{lemma}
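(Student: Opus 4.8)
The plan is to exploit the hint in the paragraph preceding the statement: the equality $q_1 = q_2$ should be recast as the assertion that $w := \log \g_1 - \log \g_2$ is a (variational) solution of the second-order elliptic equation $\div(\sqrt{\g_1 \g_2}\,\na w) = 0$ on all of $\R^n$, vanishing at infinity, from which $w \equiv 0$ follows by a uniqueness/energy argument. First I would record the algebraic identity underlying this: for a positive function $\g$ one has $\sqrt{\g}^{-1}\lp\sqrt{\g} = \tfrac14|\na\log\g|^2 + \tfrac12\lp\log\g$, and more usefully the conductivity form. Concretely, writing $v_j$ for a solution of $\lp v_j = q_j v_j$, the substitution $v_j = \sqrt{\g_j}\,\tilde v_j$ turns this into $\div(\g_j \na \tilde v_j) = 0$; so solutions of the two Schr\"odinger equations correspond to $\g_j$-harmonic functions. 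The hypothesis $q_1 = q_2$ (as distributions) then says exactly that $\sqrt{\g_1}/\sqrt{\g_2}$ and its reciprocal, when multiplied into $\g_j$-harmonic functions, behave compatibly; unwinding this is the substance of the argument.

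The cleanest route is the following. Since $\g_1 = \g_2$ outside $\Om$, the function $w = \log\g_1 - \log\g_2 \in W^{1,n}(\R^n)$ is compactly supported in $\bar\Om$. I claim $q_1 = q_2$ implies, for every $\phi \in C_0^\infty(\R^n)$,
\[
\int_{\R^n} \sqrt{\g_1\g_2}\,\na w \cd \na \phi\,dx = 0.
\]
To see this, expand: $q_j = \tfrac12\lp\log\g_j + \tfrac14|\na\log\g_j|^2$, so $0 = q_1 - q_2 = \tfrac12\lp w + \tfrac14(|\na\log\g_1|^2 - |\na\log\g_2|^2)$ in $\scriptD'$. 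Now $|\na\log\g_1|^2 - |\na\log\g_2|^2 = \na w \cd \na(\log\g_1 + \log\g_2)$, and a short computation (multiplying through by the scalar $\sqrt{\g_1\g_2}$ and using $\na\sqrt{\g_1\g_2} = \tfrac12\sqrt{\g_1\g_2}\,\na(\log\g_1+\log\g_2)$) rewrites $\sqrt{\g_1\g_2}(\tfrac12\lp w + \tfrac14\na w\cd\na(\log\g_1+\log\g_2))$ as $\tfrac12\div(\sqrt{\g_1\g_2}\,\na w)$; hence $\div(\sqrt{\g_1\g_2}\,\na w) = 0$ distributionally. I should be a little careful that all products here make sense: $\na w \in L^n$, $\na\log\g_j \in L^n$, so $|\na\log\g_j|^2 \in L^{n/2}$ and $\sqrt{\g_1\g_2}$ is bounded measurable with $L^n$ gradient — these are exactly the integrability classes in which the conductivity equation has a well-posed variational theory, so the manipulation is legitimate by density.

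With $\div(\sqrt{\g_1\g_2}\,\na w) = 0$ established and $w \in W^{1,n}(\R^n)$ compactly supported, the conclusion $w \equiv 0$ is an energy estimate: test the equation against $w$ itself (which is admissible since $w$ has compact support and $\na w \in L^n \subset L^2_{\loc}$), giving $\int \sqrt{\g_1\g_2}\,|\na w|^2\,dx = 0$; since $\sqrt{\g_1\g_2} \geq c > 0$ we get $\na w = 0$, and as $w$ is compactly supported, $w \equiv 0$, i.e.\ $\g_1 = \g_2$. The main obstacle is the first step — making the passage from "$q_1 = q_2$ as distributions" to "$w$ solves the divergence-form equation" fully rigorous at the available regularity, in particular justifying the nonlinear algebraic identity $|\na\log\g_1|^2 - |\na\log\g_2|^2 = \na w\cd\na(\log\g_1+\log\g_2)$ and the product rule for $\div(\sqrt{\g_1\g_2}\na w)$ when the coefficients only lie in $W^{1,n}\cap L^\infty$; this is handled by mollifying $\g_1,\g_2$, performing the smooth computation, and passing to the limit using $W^{1,n}$ convergence together with uniform ellipticity (the references~\cite{Sylvester1987,Brown1996,Brown2003} carry out essentially this argument).
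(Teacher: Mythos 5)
Your proof is correct, and at bottom it is the same Alessandrini energy identity the paper uses; the difference lies only in where the rigor is placed. The paper never passes through the intermediate equation $\div(\sqrt{\g_1\g_2}\,\na w)=0$: it first checks $q_i\in H^{-1}(\R^n)$ (via $\n{\a{\na\log\g}^2}_{H^{-1}}\ll\n{\na\log\g}_{4n/(n+2)}^2\ll\n{\na\log\g}_n^2$, using Sobolev embedding, H\"older and compact support), and then pairs $q_1-q_2$ directly with the single $H^1$ test function $g_1g_2(\log g_1-\log g_2)$, where $g_i=\sqrt{\g_i}$; expanding that one pairing, the cross terms cancel and one lands immediately on $\int g_1g_2\,\a{\na(\log g_1-\log g_2)}^2\,dx=0$. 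In particular the step you single out as the main obstacle --- multiplying the distributional identity $q_1=q_2$ by $\sqrt{\g_1\g_2}$ and justifying the product rule for $\div(\sqrt{\g_1\g_2}\,\na w)$ at $W^{1,n}\cap L^\infty$ regularity by mollification --- can be avoided entirely, since in the paper's arrangement every term is an honest $L^1$ integral from the outset. Your route buys a more transparent conceptual statement ($w$ is a compactly supported solution of a uniformly elliptic divergence-form equation, so the energy estimate kills it), and your mollification argument does go through at this regularity (multiplication by $\sqrt{\g_1\g_2}\in W^{1,n}\cap L^\infty$ preserves $H^1$, hence $H^{-1}$ by duality), but it is strictly more work than choosing the test function directly as the paper does.
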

\begin{proof}
First, have $q_i \in H^{-1}(\R^n)$ for each $i$. To see this we note that
\begin{align*}
\n{q}_{H^{-1}} & = \n{\f 1 2 \lp \log \g + \f 1 2  \a{\na \log \g}^2}_{H^{-1}}\\
&\ll \n{\na \log \g}_2 + \n{\a{\na \log \g}^2}_{H^{-1}}\\
&\ll \n{\na \log \g}_n + \n{\na \log \g}_{4n/(n+2)}^2 \\
&\ll \n{\na \log \g}_n + \n{\na \log \g}_n
\end{align*}
by Sobolev embedding and H\"older's inequality. It follows that we may test $q_1 - q_2$ against the function $g_1 g_2( \log g_1 - \log g_2) \in H^1(\R^n)$, where $g_i = \sqrt{\g_i}$. This gives
\begin{align*}
0& = \int [\na g_1 \cd \na (g_2 (\log g_1 - \log g_2)) - \na g_2 \cd \na(g_1 (\log g_1 - \log g_2))]\,dx\\
&=\int (g_2 \na g_1 - g_1 \na g_2) \cd \na(\log g_1 - \log g_2)\,dx\\
&=\int g_1 g_2 \a{\na(\log g_1 - \log g_2)}^2 \,dx,
\end{align*}
which implies that $g_1 = g_2$. 
\end{proof}

Now we apply the boundary determination result of~\cite{Brown2013}, which implies
\begin{theorem}
\label{boundarydetermination}
Suppose that $0<c<\g_i <c^{-1}$. If $\g_i \in W^{1,1}(\Om)$ and $\Ld_{\g_1} = \Ld_{\g_2}$, then $\g_1 = \g_2$ on $\pd \Om$.
\end{theorem}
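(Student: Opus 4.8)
The plan is to quote the boundary determination theorem of Brown~\cite{Brown2013} essentially verbatim, after reconciling the hypotheses. Brown's result asserts that for uniformly elliptic conductivities of class $W^{1,1}(\Om)$ on a Lipschitz domain, the Dirichlet-to-Neumann map $\Ld_\g$ determines the trace $\eval{\g}_{\pd\Om}$. Thus, given $\g_1,\g_2 \in W^{1,1}(\Om)$ with $0<c<\g_i<c^{-1}$ and $\Ld_{\g_1}=\Ld_{\g_2}$, applying that theorem to each of $\g_1$ and $\g_2$ and comparing yields $\eval{\g_1}_{\pd\Om}=\eval{\g_2}_{\pd\Om}$, which is exactly the claim.

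The only genuine content here is checking that the setup in the present paper matches the setup in~\cite{Brown2013}. First I would note that $\Om$ is assumed Lipschitz throughout this paper, which is the regularity Brown requires; the uniform ellipticity bound $0<c<\g_i<c^{-1}$ is likewise among his hypotheses. Second, the Dirichlet-to-Neumann map is defined in the Introduction via the weak (bilinear) formulation on $H^{1/2}(\pd\Om)\cong H^1(\Om)/H^1_0(\Om)$, and one should observe this agrees with the definition used by Brown, so that equality of the two maps in the two senses coincide. With these identifications in place, the statement follows immediately.

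I expect no real obstacle: this lemma is purely a citation, collecting an off-the-shelf input in the form needed for the proof of Theorem~\ref{maintheorem}. The main point to be careful about is simply that the conductivities appearing later lie in $W^{s,p}(\Om)$ with $(s,p)$ as in Theorem~\ref{maintheorem}, and that in every case covered $W^{s,p}(\Om)\hookrightarrow W^{1,1}(\Om)$ (indeed one has at least one derivative in $L^p$ with $p\geq n\geq 3$, hence in $L^1$ on the bounded domain $\Om$), so that Theorem~\ref{boundarydetermination} is applicable to them. Accordingly the proof is:

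\begin{proof}
This is the boundary determination result of Brown~\cite{Brown2013}: for a Lipschitz domain $\Om$ and a uniformly elliptic conductivity $\g\in W^{1,1}(\Om)$, the map $\Ld_\g$ determines $\eval{\g}_{\pd\Om}$. Applying this to $\g_1$ and $\g_2$ and using $\Ld_{\g_1}=\Ld_{\g_2}$ gives $\g_1=\g_2$ on $\pd\Om$.
\end{proof}
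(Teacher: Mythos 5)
Your proposal matches the paper exactly: the paper gives no proof of this theorem, stating it as a direct consequence of the boundary determination result of Brown~\cite{Brown2013}, just as you do. Your additional check that the hypotheses (Lipschitz domain, uniform ellipticity, $W^{s,p}(\Om)\hookrightarrow W^{1,1}(\Om)$) are met is consistent with how the result is used in the proof of Theorem~\ref{maintheorem}.
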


\begin{proof}[Proof of Theorem~\ref{maintheorem}]
By Theorem~\ref{boundarydetermination}, we have $\g_1 = \g_2$ on $\pd \Om$. Our assumptions imply that $s -1/p \leq 1$. Thus by~\cite{Marschall1987} we may extend the $\g_i$ to functions in $W^{s, p}$ such that $\g_1 = \g_2$ outside of $\Om$. By Lemma~\ref{weakequality}, this implies that
\begin{equation}
\label{weakglobal}
\j{q_1, v_1 v_2}  = \j{q_2, v_1 v_2}
\end{equation}
when each $v_j$ is a solution in $H^1_\loc(\R^n)$ to $\lp v_j - q_j v_j = 0$. 

Fix $r> 0$ and three orthonormal vectors $\{e_1,e_2, e_3\}$, and define 
\begin{align*}
\z_1(\tau,U) & = \tau U(e_1 - i e_2)\\
\z_2(\tau, U) & = -\z_1(\tau, U)\\
\ti \z_1 (\tau,U) &:= \tau Ue_1 + i (rUe_3 - \sqrt{\tau^2 - r^2}Ue_2)\\
\ti \z_2 (\tau,U) &:= -\tau U e_1 + i(rUe_3 + \sqrt{\tau^2 - r^2}Ue_2) 
\end{align*}
In what follows, all of inequalities will implicitly depend on $r$. For example, we have $\a{\z_i - \ti \z_i} \ll 1$. In particular, by Lemma~\ref{fudgezeta}, the spaces $X^b_{\z_i}$ and $X^b_{\ti \z_i}$ have equivalent norms.

Now let
\[F(\tau,U) = \sum_i \n{m_{q_i}}_{X^{1/2}_{\z_i(\tau,U)} \to X^{-1/2}_{\z_i(\tau,U)}}^p + \sum_{i,j} \n{q_i}_{X^{-1/2}_{\z_j(\tau, U)}}^2.\]
By Theorem~\ref{estimatetheorem} and the fact that $\sum_{k>1} k^{-1} = \infty$, we have
\begin{equation}
\label{globalestimate}
\liminf_{M\to\infty} M^{-1} \int_{M}^{2M}\int_{O(n)} F(\tau,U) \,d\mu(U)\,d\tau = 0.
\end{equation}
Now we use an argument from~\cite[Section 3.3]{Nguyen2014} to select $\tau$ and $U$.
Their first observation is that if $\e > 0$ and $B_\e = \{U \in O(n): \n{U - I} < \e\}$, then by simply restricting~\eqref{globalestimate} we have
\[\liminf_{M\to\infty} M^{-1} \mu(B_\e)^{-1} \int_{M}^{2M}\int_{B_\e} F(\tau,U) \,d\mu(U)\,d\tau =0.\]
Thus we may choose, for a sequence of $M=M_l$ such that $M_l \to \infty$, some $\tau = \tau_{\e,l} \in [M_l, 2M_l]$, $U = U_{\e,l} \in B_\e$ and $\d=\d_{\e,l} > 0$ such that 
\begin{equation}
\label{smallness}
\sum_i \n{m_{q_i}}_{X^{1/2}_{\z_i(\tau,U)} \to X^{-1/2}_{\z_i(\tau,U)}} + \sum_{i,j} \n{q_i}_{X^{-1/2}_{\z_j(\tau, U)}} \leq \d
\end{equation}
where $\d_{\e,l} \to 0$ as $l\to \infty$. 

By~\eqref{compactbilinear}, we have
\[\n{m_{q_i}}_{\dot X^{1/2}_{\ti\z_i(\tau,U)} \to \dot X^{-1/2}_{\ti \z_i(\tau,U)}} \ll \n{m_{q_i}}_{X^{1/2}_{\ti\z_i(\tau,U)} \to X^{-1/2}_{\ti \z_i(\tau,U)}}\]
It follows that,  
\[\n{m_{q_i}}_{\dot X^{1/2}_{\ti\z_i(\tau,U)} \to \dot X^{-1/2}_{\ti \z_i(\tau,U)}} \ll \d_{\e,l}\]
Since $\d_{\e,l} \to 0$ as $l \to \infty$, we can choose $l$ large enough that the left hand side is less than $1/2$. Since $\n{\lp_\z^{-1}}_{\dot X^{-1/2}_\z \to \dot X^{1/2}_\z} = 1$ for any $\z$, we can use the contraction mapping principle to construct solutions $\psi_i \in \dot X^{1/2}_{\ti \z_i(\tau,U)}$ to the equations $(\lp_{\ti\z_i(\tau,U)} - m_{q_i})\psi_i = q_i$, satisfying
\[\n{\psi_i}_{\dot X^{1/2}_{\ti \z_i(\tau,U)}} \ll \n{q}_{\dot X^{-1/2}_{\ti \z_i(\tau,U)}}.\]
Note that by~\eqref{easyenergy}, such a solution lies in $H^1_\loc(\R^n)$. This implies that the corresponding solution $v_i = e^{x\cd \ti\z_i(\tau,U)}(1+\psi_i)$ to the Schr\"odinger equation $(\lp-q_i)v_i$ lies in $H^1_\loc(\R^n)$ as well. 

Let $k =2r U e_3$. By~\eqref{weakglobal},
\begin{align*}
0&=\j{q_1 - q_2, e^{ik\cd x} (1 + \psi_1)(1 + \psi_2)} \\
&=\j{q_1 - q_2, e^{ik\cd x}} + \j{q_1-q_2, e^{ik\cd x} \psi_1 \psi_2} + \j{q_1-q_2, e^{ik\cd x} (\psi_1 + \psi_2)}.
\end{align*}
We need to show that the second and third terms are small. Let $\phi$ be a Schwartz function that is equal to one on the support of $q$. Then
\begin{align*}
\a{\j{q_1, e^{ik\cd x}\psi_1 \psi_2}} & =\a{\j{m_{q_1} e^{-ik\cd x} \bar \psi_2, \psi_1}} \\
&\ll \n{e^{-ik\cd x} \phi \bar \psi_2}_{ X^{1/2}_{\z_1(\tau,U)}} \n{\phi \psi_1}_{X^{1/2}_{\z_1(\tau,U)}}\\
&= \n{e^{ik\cd x} \phi \psi_2}_{X^{1/2}_{\z_2(\tau,U)}} \n{\phi \psi_1}_{X^{1/2}_{\z_1(\tau,U)}}\\
&\ll \n{\psi_2}_{\dot X^{1/2}_{\ti \z_2(\tau,U)}} \n{\psi_1}_{\dot X^{1/2}_{\ti \z_1(\tau,U)}}\\
&\ll \n{q_2}_{ X^{-1/2}_{\z_2(\tau, U)}}\n{q_1}_{X^{-1/2}_{\z_1(\tau, U)}},
\end{align*}
since the seminorms of $e^{-ik\cd x} \phi$ are bounded with a bound depending only on $r$. We can bound the $q_2$ term in the same way. On the other hand, we have
\begin{align*}
\a{\j{q_i, e^{ik\cd x} \psi_1}} & \ll \n{q_i}_{X^{-1/2}_{\z_1(\tau,U)}} \n{\psi_1}_{\dot X^{1/2}_{\ti \z_1(\tau,U)} }\\
&\ll \n{q_i}_{X_{\z_1(\tau,U)}^{-1/2}} \n{q_1}_{X^{-1/2}_{\z_1(\tau,U)}}
\end{align*}
by duality of $\dot X^{1/2}_{\z_1(\tau,U)}$ and $\dot X^{-1/2}_{\z_1(\tau,U)}$. The terms with $\psi_2$ are the same. In summary, we obtain
\begin{align}
\label{tozero}
\a{(\hat q_1 - \hat q_2)(2rUe_3)}& \ll \sum_{1\leq i,j,k,l\leq 2} \n{q_i}_{X^{-1/2}_{\z_j(\tau,U)}} \n{q_k}_{X^{-1/2}_{\z_l(\tau,U)}} \ll \d^2
\end{align}
by~\eqref{smallness}. 

To finish the proof, we again follow~\cite{Nguyen2014}. Since $B_\e$ is compact, we may pass to a subsequence such that $U_{\e,l} \to U_\e$ for some $U_\e \in B_\e$. Since the $\hat q_i$ are continuous, we may pass to the limit in~\eqref{tozero} to obtain 
\[\a{\hat q_1 - \hat q_2}(2r U_\e e_3) \ll \lim_{l\to \infty} \d_{\e,l}^2 = 0.\]
Note that by construction, we have $U_\e \to I$ as $\e \to 0$. Thus, by taking limits again, we obtain $(\hat q_1 - \hat q_2)(2 r e_3) = 0$. Since $e_3 \in S^{n-1}$ and $r$ were arbitrary, this means that $\hat q_1 - \hat q_2=0$.
\end{proof}
\section*{Acknowledgments}
The author would like to thank his advisor, Daniel Tataru, for his patient guidance and encouragement. He would also like to thank Gunther Uhlmann, Russell Brown, Alberto Ruiz and Mikko Salo for many helpful conversations. Finally, the author would like to thank the anonymous referee for carefully reading the manuscript and suggesting many corrections and improvements.
\bibliography{/home/boaz/Documents/library.bib}{}

\providecommand{\bysame}{\leavevmode\hbox to3em{\hrulefill}\thinspace}
\providecommand{\MR}{\relax\ifhmode\unskip\space\fi MR }
\providecommand{\MRhref}[2]{%
  \href{http://www.ams.org/mathscinet-getitem?mr=#1}{#2}
}
\providecommand{\href}[2]{#2}
\begin{thebibliography}{DSFKSU09}

\bibitem[AKS62]{Aronszajn1962}
{N}. Aronszajn, {A}. {Krzywicki}, and {J}. {Szarski}, \emph{{A} unique
  continuation theorem for exterior differential forms on {Riemannian}
  manifolds}, Arkiv f{\"o}r {Matematik} \textbf{4} (1962), no.~5, 417--453.

\bibitem[Ale90]{Alessandrini1990}
{Giovanni} Alessandrini, \emph{Singular solutions of elliptic equations and the
  determination of conductivity by boundary measurements}, Journal of
  {Differential} {Equations} \textbf{84} (1990), no.~2, 252--272.

\bibitem[Ale92]{Alessandrini1992}
\bysame, \emph{{A} simple proof of the unique continuation property for two
  dimensional elliptic equations in divergence form}, Quaderni {Matematici}
  {II} {Serie} 276, Dipartimento di {Scienze} {Matematiche}, {Trieste}, August
  1992.

\bibitem[ALP11]{Astala2011}
{Kari} Astala, {Matti} {Lassas}, and {Lassi} {P}{\"a}iv{\"a}rinta, \emph{The
  borderlines of the invisibility and visibility for {Calderon}'s inverse
  problem}, {arXiv}:1109.2749 {[}math-ph{]} (2011).

\bibitem[Bou93]{Bourgain1993}
{J}. Bourgain, \emph{Fourier transform restriction phenomena for certain
  lattice subsets and applications to nonlinear evolution equations}, Geometric
  {And} {Functional} {Analysis} \textbf{3} (1993), no.~2, 107--156.

\bibitem[Bro96]{Brown1996}
{Russell}~{M}. Brown, \emph{Global uniqueness in the impedance-imaging problem
  for less regular conductivities}, {SIAM} {Journal} on {Mathematical}
  {Analysis} \textbf{27} (1996), no.~4, 1049.

\bibitem[Bro03]{Brown2003}
{B}.~{H}. Brown, \emph{Electrical impedance tomography ({EIT}): a review},
  Journal of {Medical} {Engineering} \& {Technology} \textbf{27} (2003), no.~3,
  97--108.

\bibitem[Bro13]{Brown2013}
{R}.~{M}. Brown, \emph{Recovering the conductivity at the boundary from the
  {Dirichlet} to {Neumann} map: a pointwise result}, Journal of {Inverse} and
  {Ill}-posed {Problems} \textbf{9} (2013), no.~6, 567--574.

\bibitem[Cal80]{Calderon1980}
{Alberto} Calder{\'o}n, \emph{On an inverse boundary value problem},
  Computational and {Applied} {Mathematics} \textbf{25} (1980), no.~2-3,
  133--138.

\bibitem[Cha90]{Chanillo1990}
{Sagun} Chanillo, \emph{{A} problem in electrical prospection and a
  n-dimensional {Borg}-{Levinson} theorem}, Proceedings of the {American}
  {Mathematical} {Society} \textbf{108} (1990), no.~3, 761--767.

\bibitem[CR14]{Caro2014}
{Pedro} Caro and {Keith} {Rogers}, \emph{Global uniqueness for the
  {Calder}{\'o}n problem with {Lipschitz} conductivities}, {arXiv}:1411.8001
  {[}math{]} (2014).

\bibitem[DSFKSU09]{DosSantosFerreira2009}
{David} Dos {Santos}~{Ferreira}, {Carlos}~{E}. {Kenig}, {Mikko} {Salo}, and
  {Gunther} {Uhlmann}, \emph{Limiting {Carleman} weights and anisotropic
  inverse problems}, Inventiones mathematicae \textbf{178} (2009), no.~1,
  119--171.

\bibitem[GLU03]{greenleaf2003}
{Allan} Greenleaf, {Matti} {Lassas}, and {Gunther} {Uhlmann}, \emph{On
  nonuniqueness for {Calder}{\'o}n's inverse problem}, Mathematical {Research}
  {Letters} \textbf{10} (2003), no.~5, 685--693.

\bibitem[HT13]{Haberman2013}
{Boaz} Haberman and {Daniel} {Tataru}, \emph{Uniqueness in {Calder}{\'o}n's
  problem with {Lipschitz} conductivities}, Duke {Mathematical} {Journal}
  \textbf{162} (2013), no.~3, 497--516.

\bibitem[KRS87]{Kenig1987}
{Carlos}~{E}. Kenig, {A}. {Ruiz}, and {C}.~{D}. {Sogge}, \emph{Uniform
  {Sobolev} inequalities and unique continuation for second order constant
  coefficient differential operators}, Duke {Mathematical} {Journal}
  \textbf{55} (1987), no.~2, 329--347.

\bibitem[KSVW08]{Kohn2008}
{R}.~{V}. Kohn, {H}. {Shen}, {M}.~{S}. {Vogelius}, and {M}.~{I}. {Weinstein},
  \emph{Cloaking via change of variables in electric impedance tomography},
  Inverse {Problems} \textbf{24} (2008), no.~1, 015016.

\bibitem[KT01]{Koch2001}
{Herbert} Koch and {Daniel} {Tataru}, \emph{Carleman estimates and unique
  continuation for second-order elliptic equations with nonsmooth
  coefficients}, Communications on {Pure} and {Applied} {Mathematics}
  \textbf{54} (2001), no.~3, 339--360.

\bibitem[KU14]{Krupchyk2014}
{Katsiaryna} Krupchyk and {Gunther} {Uhlmann}, \emph{Uniqueness in an inverse
  boundary problem for a magnetic {Schr}{\"o}dinger operator with a bounded
  magnetic potential}, Communications in {Mathematical} {Physics} \textbf{327}
  (2014), no.~3, 993--1009.

\bibitem[KV84]{Kohn1984}
{Robert} Kohn and {Michael} {Vogelius}, \emph{Determining conductivity by
  boundary measurements}, Communications on {Pure} and {Applied} {Mathematics}
  \textbf{37} (1984), no.~3, 289--298.

\bibitem[Man98]{Mandache1998}
{Niculae} Mandache, \emph{On a counterexample concerning unique continuation
  for elliptic equations in divergence form}, Mathematical {Physics},
  {Analysis} and {Geometry} \textbf{1} (1998), no.~3, 273--292.

\bibitem[Mar87]{Marschall1987}
{J}{\"u}rgen Marschall, \emph{The trace of {Sobolev}-{Slobodeckij} spaces on
  {Lipschitz} domains}, manuscripta mathematica \textbf{58} (1987), no.~1-2,
  47--65.

\bibitem[Mil73]{Miller1973}
{Keith} Miller, \emph{Nonunique continuation for uniformly parabolic and
  elliptic equations in selfadjoint divergence form with {H}{\"o}lder
  continuous coefficients}, Bulletin of the {American} {Mathematical} {Society}
  \textbf{79} (1973), no.~2, 350--354.

\bibitem[NS14]{Nguyen2014}
{Hoai}-{Minh} Nguyen and {Daniel} {Spirn}, \emph{Recovering a potential from
  {Cauchy} data via complex geometrical optics solutions}, {arXiv}:1403.2255
  {[}math{]} (2014).

\bibitem[Pli63]{Plis1963}
{A}. Pli{\'s}, \emph{On non-uniqueness in {Cauchy} problem for an elliptic
  second order differential equation}, Bulletin de l'{Acad}{\'e}mie {Polonaise}
  des {Sciences}. {S}{\'e}rie des {Sciences} {Math}{\'e}matiques,
  {Astronomiques} et {Physiques} \textbf{11} (1963), 95--100.

\bibitem[PPU03]{Paivarinta2003}
{Lassi} {P}{\"a}iv{\"a}rinta, {Alexander} {Panchenko}, and {Gunther} {Uhlmann},
  \emph{Complex geometrical optics solutions for {Lipschitz} conductivities},
  Revista {Matematica} {Iberoamericana} \textbf{19} (2003), no.~1, 57--72.

\bibitem[ST09]{Salo2009}
{Mikko} Salo and {Leo} {Tzou}, \emph{Carleman estimates and inverse problems
  for {Dirac} operators}, Mathematische {Annalen} \textbf{344} (2009), no.~1,
  161--184.

\bibitem[Ste93]{Stein1993}
{Elias}~{M} Stein, \emph{Harmonic analysis: real-variable methods,
  orthogonality, and oscillatory integrals}, Princeton {University} {Press},
  Princeton, {NJ}, 1993, With the assistance of Timothy S. Murphy.

\bibitem[SU87]{Sylvester1987}
{John} Sylvester and {Gunther} {Uhlmann}, \emph{{A} global uniqueness theorem
  for an inverse boundary value problem}, The {Annals} of {Mathematics}
  \textbf{125} (1987), no.~1, 153--169.

\bibitem[SU88]{Sylvester1988}
\bysame, \emph{Inverse boundary value problems at the
  boundary{\textemdash}continuous dependence}, Communications on {Pure} and
  {Applied} {Mathematics} \textbf{41} (1988), no.~2, 197--219.

\bibitem[Tat96]{Tataru1996a}
{Daniel} Tataru, \emph{The ${X}^s_\theta$ spaces and unique continuation for
  solutions to the semilinear wave equation}, Communications in {Partial}
  {Differential} {Equations} \textbf{21} (1996), no.~5-6, 841--887.

\bibitem[Tom75]{Tomas1975}
{Peter}~{A}. Tomas, \emph{{A} restriction theorem for the {Fourier} transform},
  Bulletin of the {American} {Mathematical} {Society} \textbf{81} (1975),
  no.~2, 477--478.

\bibitem[Wol92]{Wolff1992}
{Thomas} Wolff, \emph{{A} property of measures in ${R}^n$ and an application to
  unique continuation}, Geometric {And} {Functional} {Analysis} \textbf{2}
  (1992), no.~2, 225--284.

\end{thebibliography}
\bibliographystyle{amsalpha}
\end{document}